\def\e{{\epsilon}}
\def\i{\mathbf{i}}
\def\E{\mathbb{E}}
\def\w{{\omega}}
\def\mb{\mathbf}
\newtheorem{remark}{Remark}
\newtheorem{theorem}{Theorem}
\newtheorem{lemma}{Lemma}
\begin{document}

\begin{frontmatter}

\title{Gaussian Process Regression and Conditional Polynomial Chaos for Parameter Estimation}

\author[mymainaddress]{Jing Li\corref{mycorrespondingauthor}}
\cortext[mycorrespondingauthor]{Corresponding author}
\ead{jing.li@pnnl.gov}

\author[mymainaddress]{Alexandre Tartakovsky}
\ead{alexandre.tartakovsky@pnnl.gov}
\address[mymainaddress]{Pacific Northwest National Laboratory, Richland, WA 99352.}%

\begin{abstract}
We present a new approach for constructing a data-driven surrogate model and using it for Bayesian parameter estimation in partial differential equation (PDE) models. We first use parameter observations and Gaussian Process regression to condition the Karhunen-Lo\'{e}ve (KL) expansion of the unknown space-dependent parameters and then build the conditional generalized Polynomial Chaos (gPC) surrogate model of the PDE states. Next, we estimate the unknown parameters by computing coefficients in the KL expansion minimizing  the square difference between the gPC predictions and measurements of the states using the Markov Chain Monte Carlo method. 
Our approach addresses two major challenges in the Bayesian parameter estimation. First, it reduces dimensionality of the parameter space and replaces expensive direct solutions of PDEs with the conditional gPC surrogates. Second, the estimated parameter field exactly matches the parameter measurements. In addition, we show that the conditional gPC surrogate can be used to estimate the states variance, which, in turn, can be used to guide data acquisition. We demonstrate that our approach improves its accuracy with application to one- and two-dimensional Darcy equation with (unknown) space-dependent hydraulic conductivity. We also discuss the effect of hydraulic conductivity and head locations on the accuracy of the hydraulic conductivity estimations.

\end{abstract}

\begin{keyword}
KL expansion; conditional Gaussian process; conditional gPC surrogate; Bayesian regression; Markov chain Monte Carlo
\end{keyword}

\end{frontmatter}

\section{Introduction}
Here, we focus on inverse problems, namely, parameter estimation in partial differential equations (PDEs) models. We are motivated by applications with high-dimensional parameter space.  These problems are usually ill posed and deterministic parameter estimation methods are computationaly expensive and require regularization to obtain unique solutions \cite{beck1985inverse, Cheney1990inverse, mclaughlin1996reassessment, Hanke_1997}. As an alternative, Bayesian methods have been proposed for obtaining the most probable combination of the parameters \cite{mclaughlin1996reassessment,stuart15c}. Among Bayesian methods, the Markov chain Monte Carlo method provides a powerful tool for generating the posterior distributions of the parameters \cite{gamerman2006MCMC} and finding the most probable point in the parameter space for the ``optimal'' choice of parameters. However, MCMC requires a large number of forward solutions of the PDE model to generate posterior distributions that could  be computationally expensive. Therefore, accurate and computational efficient surrogates for the PDE model are often sought \cite{Christen2005}. 

Various surrogates have been adopted to accelerate the Bayesian inference for inverse problems \cite{Frangos2010,Nguyen2014,Cui2015, Chen2016}.  For example, the Gaussian process regression (GPR) or Kriging model was  used in \cite{JJZhang2016} to approximate the relation between parameters and the forward solutions. In \cite{MARZOUK2007, MARZOUK2009, MarzoukXiu2009}, the generalized polynomial chaos (gPC) expansion was employed to approximate the forward solution of the PDE models as a function of the random parameters in the  Karhunen-Lo\'{e}ve (KL) expansion of the unknown coefficient. 

In this work, we propose a novel gPC-based surrogate model, which we call the conditional gPC surrogate model, and use it to approximate the solution of a PDE model in the MCMC approach for parameter estimation. The easy-to-evaluate conditional gPC surrogate significantly accelerates the MCMC sampling and the parameter estimation and quarantees that the estimated parameters exactly match the parameter measurements. The conditional gPC is based on the data-driven conditional KL representation of unknown space-dependent parameters \cite{ossiander2014,lih_2014,ZhangDX_KL} that reduces the dimensionality of the parameter space and allows parameter estimation with relatively few measurements. There are two levels of the dimension reduction in the conditional KL and gPC methods. First, the exact infinite KL representation of a random parameter is approximated with a finite KL  expansion \cite{Loeve1978,GhanemS91,Schwab2006,Xiu10}. At the second level,  available observations of the parameter are used to condition the truncated KL expansion and further reduce the dimensionality. The reduction of dimensionality by conditioning on the parameter measurements partially alleviates the curse of dimensionality and allows  a high-order gPC construction with relatively few samples (collocation points).

We use the conditional gPC surrogate model in combination with MCMC to estimate a partially observed space-dependent diffusion coefficient in the steady-state diffusion equation given partial observations of the state variables. 
%Parameter estimation in elliptic PDEs, such as the diffusion equation, from partial observations of the parameters and/or states is an extensively studied subject \cite{mclaughlin1996reassessment}. 
We model the unknown parameters and states as random fields and represent the parameters with the conditional KL expansion. The partial measurements of the (random) state variable are treated as measurements of one realization of this variable and a minimization problem is formed to estimate the parameter by computing the optimal coefficients (values of the random variables) in the conditional KL expansion. Furthermore, we evaluate several sampling strategies of the states. We find that the gPC surrogate is more sensitive with respect to the parameters at the locations where it has higher variance. Therefore, measuring the state variable where its variance is largest allows more accurate parameter estimation as compared to uniformly and randomly distributed state measurements. Since in practice only few measurements of state are available, the ``optimal'' choice of measurement locations is important. Moreover, this strategy can be extended to the other regression approaches under the probabilistic framework. 

The remainder of this paper is organized as follows. In Section ~\ref{sec:opt_form}, we formulate the spatial dependent coefficient estimation problem via the parameter minimization problem. The conditional KL representation with reduced dimensionality is stated in Section ~\ref{sec:kappa_representation}. In Section ~\ref{sec:cond_gPC}, we construct the conditional gPC surrogate and analyze its properties. In Section ~\ref{sec:inverse_problem}, we reformulate our minimization problem under the Bayesian framework and briefly introduce the MCMC method for parameter estimation. We study the accuracy of our approach through numerical examples, including parameter estimation in one- and two-dimensional elliptic PDEs in Section ~\ref{sec:num}. Conclusions are given in Section ~\ref{sec:conclusion}.

\section{Problem Formulation}\label{sec:opt_form}

Consider the (deterministic) steady-state diffusion equation with 
space-dependent partially known coefficient $\hat{\kappa}(\mathbf{x})$ 
and appropriate boundary conditions:

\begin{align}\label{eq:diffusion-deterministic}
\begin{split}
-\nabla \cdot (\hat{\kappa}(\mb{x})\nabla \hat{u}(\mb{x})) &= 0, \qquad \mb{x}\in D;\\
 \hat{u}(\mb{x}) &= f(\mb{x}),\qquad \mb{x} \in \partial D_D;\\
 \vec{n}\cdot \hat{\kappa}(\mb{x})\nabla  \hat{u}(\mb{x})&= g(\mb{x}), \qquad \mb{x} \in \partial D_L.
\end{split}
 \end{align}
Among other applications, Eq (\ref{eq:diffusion-deterministic}) describes flow in geological porous media, where $\hat{\kappa}(\mb{x})$ is the hydraulic conductivity and $\hat{u}(\mb{x})$ is the hydraulic head. In this application, for financial and technical reasons, $\hat{\kappa}(\mb{x})$ only can be measured in a few locations. 
%Based on $\hat{\kappa}(\mb{x})$ measurements and in the absence of $\hat{u}(\mb{x})$ observations, $\hat{\kappa}(\mb{x})$ can be estimated statistically using GPR.  If measurements of $\hat{u}(\mb{x})$ are available ($\{\hat{u}(\mb{x}^{(j)})\}_{j=1}^{N_k}$), then a deterministic estimate of $\hat{\kappa}(\mb{x})$ is usually sought. 
Numerical treatments of Eq (\ref{eq:diffusion-deterministic}) are based on the idea that $\hat{\kappa}(\mb{x})$ can be resolved by a finite vector 
$\hat{\bm{\kappa}}:=\left(\hat{\kappa}(\mb{x}_1),\dots,\hat{\kappa}(\mb{x}_n)\right)$ consisting of $\hat{\kappa}$ values 
 at the collection of points $\{\mb{x}_i\}_{i=1}^n \subset D$. 
 %and then implement MCMC to obtain the posterior distributions of $\hat{\bm{\kappa}}$ to determine $\hat{\kappa}(\mb{x})$. 

We employ the probabilistic approach where we treat (the partially known) conductivity as a random field with prior distribution learned from the $\hat{k}$ measurements. Specifically, we employ the probability space $(\Omega,\mathcal{F},P)$ and assume that $\hat{\kappa}(\mb{x})$ is a realization of a spatially heterogenous random field, i.e., there exists a $P$-measurable map $\kappa(\cdot,\omega):\Omega \rightarrow L^{\infty}(D)$ and $\hat{\kappa}(\mb{x}) = \kappa(\mb{x},\omega^*)$. 
%Furthermore, it is often assumed that $\log\kappa(\bm{x},\omega)$ is a square integrable Gaussian random field, and the covariance function.

%Another common strategy is to treat conductivity as a random variable $\kappa(\mb{x},\omega)$ with $\hat{\kappa}(\mb{x})=\kappa(\mb{x},\omega^*)$, i.e.,  \textcolor{red}{(Define $\omega$.)} is often assumed that $\log \kappa(\mb{x},\omega)$ is a square integrable Gaussian random field, and the covariance function $\kappa(\mb{x},\omega)$ is estimated from the measurements of $\hat{\kappa}(\mb{x})$. 
This probabilistic approach renders Eq (\ref{eq:diffusion-deterministic}) stochastic:  

\begin{align}\label{eq:diffusion}
\begin{split}
\nabla \cdot (\kappa(\mb{x},\omega)\nabla u(\mb{x},\omega)) &= 0, \qquad \mb{x}\in D;\\
 u(\mb{x},\omega) &= f(\mb{x}),\qquad \mb{x} \in \partial D_D;\\
 \vec{n}\cdot \kappa(\mb{x},\omega)\nabla  u(\mb{x},\omega)&= g(\mb{x}), \qquad \mb{x} \in \partial D_L.
\end{split}
 \end{align}

We propose to estimate $\hat{\kappa}(\mb{x}) = \kappa(\mb{x},\omega^*)$ given partial measurements $\{\hat{\kappa}(\mb{x}^{(i)})\}_{i=1}^{N_m}$ and $\{\hat{u}(\mb{x}^{(j)})\}_{j=1}^{N_k}$ by solving the following minimization problem, 
\begin{equation}\label{eq:opt_o_full}
{\omega}^* = \textrm{argmin}_{\omega\in\Omega} \left[ \sum_{j=1}^{N_k} |\hat{u}(\mb{x}^j)-u(\mb{x}^{j},\omega)|^2
+ \sum_{j=1}^{N_m} |\hat{k}(\mb{x}^j)-k(\mb{x}^{j},\omega)|^2 \right].
\end{equation}

Solving this ``full'' parameter estimation problem with Bayesian inference is challenging specifically, it is challenging  to minimize both terms in Eq (\ref{eq:opt_o_full}). Instead, it is common to solve a simpler minimization problem: 
\begin{equation}\label{eq:opt_o}
{\omega}^* = \textrm{argmin}_{\omega\in\Omega}\sum_{j=1}^{N_k} |\hat{u}(\mb{x}^j)-u(\mb{x}^{j},\omega)|^2,
\end{equation}
where $\hat{\kappa}$ measurements are only used to obtain a prior distribution of $k(\mb{x},\omega)$ \cite{MARZOUK2007, MARZOUK2009, MarzoukXiu2009}. Therefore, $\kappa(\mb{x},\omega^*)$ found from the optimization problem (\ref{eq:opt_o}) does not guarantee to much $\hat{\kappa}$ observations, while one found from (\ref{eq:opt_o_full}) does. 
Following common practice, we assume that the conductivity $\kappa(\mb{x}, \omega)$ in Eq (\ref{eq:diffusion}) has a lognormal distribution, i.e., $Y(\mb{x},\omega) = \ln \kappa(\mb{x}, \omega)$ has normal distribution \cite{de1986quantitative}. 
In the work of \cite{MARZOUK2007, MARZOUK2009, MarzoukXiu2009}, the ``unconditional'' KL expansion of  $Y(\mb{x},\omega)$ and gPC representation of $u(\mb{x},\omega)$ in Eq (\ref{eq:opt_o}) were used. 
Here, we use a ``conditional'' KL representation of $Y(\mb{x},\omega)$ and conditional gPC stochastic collocation method to construct a surrogate for $u(\mb{x},\omega)$ and solve the optimization problem (\ref{eq:opt_o_full}). The conditional surrogate has two main advantages: it reduces the dimensionality of $\Omega$; and it exactly satisfies the second term in (\ref{eq:opt_o_full}), i.e., $\hat{k}(\mb{x}^j) \equiv \tilde{k}(\mb{x}^{j},\omega)$, where $\tilde{k}(\mb{x}^{j},\omega)$ is the KL expansion of $\kappa$ conditioned on $\{\hat{\kappa}(\mb{x}^{(i)})\}_{i=1}^{N_m}$. The latter reduces the optimization problem  (\ref{eq:opt_o_full}) to the simpler optimization problem  (\ref{eq:opt_o}). Both these advantages of the conditional gPC surrogate reduce the computational cost and improve accuracy of parameter estimation. The conditional KL and gPC methods are described in the following two sections. In Section \ref{sec:inverse_problem}, we present a method for solving the minimization problem (\ref{eq:opt_o}) and demonstrate that its dimensionality is significantly smaller than those based on the unconditional KL representation of $Y(\mb{x},\omega)$.   
 
%\section{Methods for Stochastic PDEs (Forward Problem)}
\section{Karhunen-Lo\`{e}ve representation of $Y(\mb{x},\omega)$}\label{sec:kappa_representation}
KL representation of random coefficients combined with the gPC collocation method is the most common way to solve the stochastic PDE \eqref{eq:diffusion}. 
The main challenge in this approach is that its computational cost exponentially increases with the number of terms in the truncated KL expansion.    
Until recently, gPC methods had been only applied to stochastic PDEs with random coefficients modeled as second-order stationary random fields, i.e., fields with constant variances and covariance functions depending only on the distance between two points. The number of terms in the truncated KL expansion of the stationary covariance function  depends on the correlation length (it increases with the decreasing correlation length). Here, we condition the KL expansion of $Y(\mb{x},\omega)$ on the measurement $\{\ln\hat{\kappa}(\mb{x}^{(i)})\}_{i=1}^{N_m}$. The ``conditional'' KL representation of $Y(\mb{x},\omega)$ has zero variance at 
$\{\mb{x}^{(i)}\}_{i=1}^{N_m}$
 (for simplicity, we assume that the measurements are exact) and increases away from the measurement locations. In this section, we demonstrate that the dimensionality of conditional KL expansion is smaller than that of the corresponding unconditional KL expansion. In the following two sections, we  introduce the unconditional and conditional KL expansions of   $Y(\mb{x},\omega)$. 

%The randomness of \eqref{eq:diffusion} all rises from the uncertainties of the spatial distributed diffusion coefficient, then modeling and discretization of the diffusion coefficient plays an important role in constructing accurate surrogate for the forward problem \eqref{eq:diffusion}.

\subsection{``Unconditional'' Karhunen-Lo\`{e}ve representation of $Y(\mb{x},\omega)$}\label{sec:random_field}
For a mean-square continuous stochastic process $Y(\mb{x},\omega) = \E[Y(\mb{x},\omega)]+Y'(\mb{x},\omega)$ with the mean (expectation with respect to $\omega$) $\overline{Y}(\mb{x})=\E[Y(\mb{x},\omega)]$, zero-mean fluctuations $Y'(\mb{x},\omega)$, and ``unconditional'' covariance function $C_Y(\mb{x},\mb{y}):= \E[Y'(\mb{x},\omega)Y'(\mb{y},\omega)]$, $\mb{x},\mb{y}\in D$, the KL expansion is given by Mercer's theorem \cite{mercer1909xvi} as
\begin{equation}\label{def:KL}
Y(\mb{x},\omega) =  \overline{Y}(\mb{x}) + \sum_{n=1}^{\infty}\sqrt{\lambda_n}\xi_n(\omega)\e_{n}(\mb{x}), \quad \textrm{in } L^2(\Omega), 
\end{equation}
where $\lambda_n$ are positive eigenvalues and $\e_n(\mb{x})$ are mutually orthogonal eigenfunctions, i.e., $\int_D \e_n(\mb{x})\e_l(\mb{x})d\mb{x} = \delta_{n,l}$ and $\delta_{n,l}$ is the Kronecker delta function. The eigenfunctions are found as the solution of the Fredholm integral equation of the second kind:
\begin{equation}\label{def:e_i}
\lambda \e(\mb{x}) = \int_D C_Y(\mb{x},\mb{y})\e(\mb{y}) d\mb{y}.
\end{equation}
The random variables $\xi_n$ are mutually uncorrelated and have zero mean and unit variance.
% and are given by
%\begin{equation}
%\xi_n(\omega) = \frac{1}{\sqrt{\lambda_n}}\int_D X(\mb{x},\omega)\e_n(\mb{x})d\mb{x}.
%\end{equation}
Moreover, if $Y(\mb{x},\omega)$ is a Gaussian process, then $\xi_n$ are independent standard normal random variables. 
By convention, the eigenvalues $\lambda_n$ in the KL expansion are arranged in the decreasing order and the truncated $N$-term KL expansion of $Y(\mb{x},\omega)$ is defined as:
\begin{equation}\label{def:truncated_KL}
Y_N(\mb{x},\w) := \overline{Y}(\mb{x})+\sum_{n=1}^N\sqrt{\lambda_n}\e_n(\mb{x})\xi_n(\omega).
\end{equation}
%According to Mercer's theorem, $\lim_{N\rightarrow\infty}Y_N(\mb{x},\omega) = Y(\mb{x},\omega)$. 
%For a general Gaussian random field, $Y(\mb{x},\omega)$, $X(\mb{x},\omega)$ is defined as $X(\mb{x},\omega) := Y(\mb{x},\omega)-E(Y(\mb{x},\omega))$. 

\subsection{``Conditional'' Karhunen-Lo\`{e}ve representation of $Y(\mb{x},\omega)$}
 The observations $\{Y(\mb{x}^{(i)}) = \ln \kappa(\mb{x}^{(i)}) \}_{i=1}^{N_m}$ are used to compute the (unconditional) stationary covariance describing an unconditional random field $Y$ through the variagram analysis or by maximizing the likelihood function\cite{Cressie1993}. In addition, the observations can be used to model a conditional random field using GPR as presented in \cite{ossiander2014}. In the following, we will use $\mb{x}^*$ to denote the set of $N_m$ observation locations $\{\mb{x}^{(i)}\}_{i=1}^{N_m}$ of the random field and $Y(\mb{x}^*)$ to denote the column vector of $N_m$ observations $\{Y(\mb{x}^{(i)})\}_{i=1}^{N_m}$.
%Starting from the KL representation of $Y(\mb{x},\w)$, i.e., $Y(\mb{x},\w) = \overline{Y(\mb{x},\omega)}+\sum_{n=1}^\infty\sqrt{\lambda_n}\e_n(\mb{x})\xi_n$ with $N_m$ observations $Y(\mb{x}^*)$, 
Next, we define the covariance matrix of the $Y$ observations:
\begin{equation}
\Sigma_{i,j} = C_Y(\mb{x}^{(i)},\mb{x}^{(j)}) = \sum_{n=1}^{\infty} \lambda_n \e_n(\mb{x}^{(i)})\e_n(\mb{x}^{(j)}), \quad i,j = 1,\dots, N_m,
\end{equation}
and $R$ as the matrix with $n$th row given by the values of the $n$th eigenfunction at the observed locations $\mb{x}^*$,
\[
\e_n(\mathbf{x^*}) = (\e_n(\mb{x}^{(1)}), \dots, \e_n(\mb{x}^{(N_m)})).
\]
Next, we approximate  $Y(\mb{x},\w)$ with the truncated KL expansion with $N_G$ terms $Y(\mb{x},\w) \approx Y_{N_G}(\mb{x},\w) =\overline{Y}(\mb{x})+\sum_{n=1}^{N_G}\sqrt{\lambda_n}\e_n(\mb{x})\xi_n$.  Then, $R$ is a $N_G\times N_m$ matrix and $\Sigma = R^T\Lambda R$, where $\Lambda$ is the diagonal matrix with $(\lambda_1,\dots,\lambda_{N_G})^T$ being the positive diagonal vector. 
%If $R$ of the selected observations is not full rank, than we need to choose another $N_m$ points until $R$ is full rank. 

In \cite{ossiander2014}, it was demonstrated that $Y_{N_G}(\mb{x},\w)$ conditioned on  
$Y(\mb{x}^*) $, $\widetilde{Y}_{N_G}(\mb{x},\w) := [Y_{N_G}(\mb{x},\w)|Y(\mb{x}^*)]$, can be written in term of 
$\xi_n$ conditioned on 
$Y(\mb{x}^*) $,
$\widetilde{\xi_n}:=[\xi_n|Y(\mathbf{x}^*)]$, $n=1,...,N_G$.
Noting that the covariance between $\xi_n$ and observation $Y(\mb{x}^{(i)})$ is:
\[
C_{\xi_n,Y}(\mb{x}^{(i)}) = \sqrt{\lambda_n}\e_n(\mb{x}^{(i)}),
\]
 the conditional  mean $\tilde{\mu}$ and covariance $\widetilde{M} = (\widetilde{m}_{n,k})\in \mathbb{R}^{N_G\times N_G}$ of $\{ \widetilde{\xi}_n\}_{n=1}^{N_G}$ is obtained in the form  \cite{Tong1990}:
\begin{eqnarray}
\tilde{\mu}_n &= &\E[\widetilde{\xi}_n] = \sqrt{\lambda_n}\e_n(\mathbf{x}^*) \Sigma^{-1}({Y}(\mathbf{x}^*)-\overline{Y}(\mb{x}^*))\\
\widetilde{m}_{n,k} &=& C_{\widetilde{\xi}_n, \widetilde{\xi}_k} = \delta_{n,k}-\sqrt{\lambda_k}\e_k(\mb{x}^*)\Sigma^{-1}\e_n(\mb{x}^*)\sqrt{\lambda_n}.
\end{eqnarray}
Then,  
 $\{\widetilde{\xi}_n\}_{n=1}^{N_G}$ has the same distribution as 
\[ 
\tilde{\bm{\eta}} = \tilde{\bm{\mu}}+ \widetilde{M}\bm{\eta},
\]
where $\tilde{\bm{\mu}}=\Lambda^{1/2}R\Sigma^{-1}({Y}(\mathbf{x}^*)-\overline{Y(\mb{x}^*)})$,  $\widetilde{M} = I-\Lambda^{1/2}R\Sigma^{-1}R^T\Lambda^{1/2}$, and $\{\eta_k\}_{k=1}^{N_G}$ is a sequence of $i.i.d.$ $\mathcal{N}(0,1)$ random variables. As a result, $\tilde{Y}_{N_G}(\mb{x},\omega)$ can be written in terms of  $\{\eta_k\}_{k=1}^{N_G}$ as
\begin{equation}\label{def:cond_rf}
\begin{split}
\widetilde{Y}_{N_G}(\mb{x},\w) &= \overline{Y}(\mb{x}) +\sum_{n=1}^{N_G}\sqrt{\lambda_n}\e_n(\mb{x})\tilde{\eta}_n\\
&=\overline{Y}(\mb{x})+\sum_{n=1}^{N_G}\sqrt{\lambda_n}\e_n(\mb{x})\tilde{\mu}_n+\sum_{n=1}^{N_G}\sqrt{\lambda_n}\e_n(\mb{x})\sum_{l=1}^{N_G} m_{n,l}\eta_l.
\end{split}
\end{equation}
Note that $\Sigma$ should be a full rank matrix and $rank(\Sigma) = N_m$. If $\Sigma$ is not full rank, then it is possible to select a subset of the $N_m$ measurements for which  $\Sigma$ would be full rank.

Below,  we prove a lemma and a theorem that the rank of $\widetilde{M}$ and the dimension of $\widetilde{Y}_{N_G}(\mb{x},\w)$ could be reduced to $N_G - N_M$. 
\begin{lemma}\label{le:rankM}
If $rank(\Sigma)=N_m$, then $rank(\widetilde{M}) = N_G-N_m$.
\end{lemma}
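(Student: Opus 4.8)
The plan is to rewrite $\widetilde{M}=I-P$ with $P:=\Lambda^{1/2}R\Sigma^{-1}R^{T}\Lambda^{1/2}$, and to identify $P$ as a projection matrix of rank $N_m$; the complementary matrix $I-P$ then has rank $N_G-N_m$ automatically. So the whole statement reduces to two facts about $P$: that it is idempotent, and that its rank equals $N_m$.

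First I would verify that $P$ is idempotent. Substituting the defining relation $\Sigma=R^{T}\Lambda R$ into $P^{2}$ gives
\[
P^{2}=\Lambda^{1/2}R\Sigma^{-1}\left(R^{T}\Lambda R\right)\Sigma^{-1}R^{T}\Lambda^{1/2}=\Lambda^{1/2}R\Sigma^{-1}R^{T}\Lambda^{1/2}=P,
\]
so $P^{2}=P$. Since $P$ is also symmetric (because $\Lambda$ and $\Sigma^{-1}$ are symmetric), $P$ is an orthogonal projection and its eigenvalues are all $0$ or $1$. For such a matrix $\mathbb{R}^{N_G}=\mathrm{Range}(P)\oplus\mathrm{Ker}(P)$ with $\mathrm{Range}(I-P)=\mathrm{Ker}(P)$, which yields the key identity $rank(I-P)=N_G-rank(P)$. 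Thus it remains only to compute $rank(P)$.

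Second I would show $rank(P)=N_m$. Because $\Lambda$ is diagonal with strictly positive entries, $\Lambda^{1/2}$ is invertible and $rank(P)=rank(R\Sigma^{-1}R^{T})$. The hypothesis $rank(\Sigma)=N_m$ forces $R$ to have full column rank: since $R$ is $N_G\times N_m$, we have $N_m=rank(\Sigma)=rank(R^{T}\Lambda R)\le rank(R)\le N_m$, hence $rank(R)=N_m$. Moreover $\Sigma=R^{T}\Lambda R$ is then positive definite, so $\Sigma^{-1}$ is positive definite and $R\Sigma^{-1}R^{T}$ is positive semidefinite of rank $rank(R)=N_m$ (multiplying a matrix with trivial kernel on the left by $R$ preserves rank). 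Combining $rank(P)=N_m$ with the projection identity gives $rank(\widetilde{M})=rank(I-P)=N_G-N_m$.

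The main obstacle is not the final rank bookkeeping but the two structural facts that enable it: verifying the idempotency of $P$ via $\Sigma=R^{T}\Lambda R$, and deducing full column rank of $R$ from the single assumption $rank(\Sigma)=N_m$. Once these hold, the $\{0,1\}$ eigenvalue structure of a projection delivers the result immediately. The positivity of $\Lambda$ (so that $\Lambda^{1/2}$ is genuinely invertible) is the hypothesis that silently underpins the rank computation and should be stated explicitly.
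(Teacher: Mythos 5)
Your proof is correct and takes essentially the same route as the paper's: both arguments hinge on recognizing that $P=\Lambda^{1/2}R\Sigma^{-1}R^{T}\Lambda^{1/2}$ (equivalently $\widetilde{M}=I-P$) is a symmetric idempotent, verified through the identity $\Sigma=R^{T}\Lambda R$, and on showing that the rank of this projection equals $N_m$. The only difference is bookkeeping: the paper pins down $\dim(\ker(\widetilde{M}))$ by squeezing it between two inequalities (an upper bound $\mathrm{rank}(I-\widetilde{M})\le \mathrm{rank}(\Sigma)=N_m$ and a lower bound from $\widetilde{M}\Lambda^{1/2}R=0$ together with $\mathrm{rank}(\Lambda^{1/2}R)\ge N_m$), whereas you compute $\mathrm{rank}(P)=N_m$ in one step from the full column rank of $R$; both rest on exactly the same structural facts.
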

\begin{proof} 
By definition, it is easy to verify that $M^2 = M$ and $M^T = M$. Since $M-M^2 = M(I-M) = 0$, the dimension of the null space of $M$ equals to the dimension of the range space of $I-M$ which is the same as the rank of $I-M$, i.e., $dim(ker(M)) = rank(I-M)$.
%\[
%\begin{split}
%M^2 &= (I-\Lambda^{1/2}R\Sigma^{-1}R^T\Lambda^{1/2})(I-\Lambda^{1/2}R\Sigma^{-1}R^T\Lambda^{1/2})\\
%& = I - 2I\Lambda^{1/2}R\Sigma^{-1}R^T\Lambda^{1/2} + \Lambda^{1/2}R\Sigma^{-1}R^T\Lambda^{1/2}\Lambda^{1/2}R\Sigma^{-1}R^T\Lambda^{1/2}\\
%&=I - I\Lambda^{1/2}R\Sigma^{-1}R^T\Lambda^{1/2}\\
%&=M
%\end{split}
%\]
Since $I-M = \Lambda^{1/2}R\Sigma^{-1}R^T\Lambda^{1/2}$, then $rank(I-M)\leq rank(\Sigma) = N_m$. 

Also, 
$$M\Lambda^{1/2}R = \Lambda^{1/2}R - \Lambda^{1/2}R\Sigma^{-1}R^T\Lambda^{1/2}\Lambda^{1/2}R = 0.$$
Then, $dim(ker(M))\geq rank(\Lambda^{1/2}R)$. Since $\Sigma = R^T\Lambda R = (\Lambda^{1/2}R)^T(\Lambda^{1/2}R)$ and $rank(\Sigma) = N_m$, then $rank(\Lambda^{1/2}R) \geq N_m$. Thus, we have $dim(ker(M))\geq N_m$. In combination with the previous result, we get $dim(ker(M)) = N_m$ and $rank(M) = N_G-dim(ker(M)) = N_G-N_m$.
\end{proof}

%With the conditional random field $\tilde{Y}(\mb{x},\w)$, $\Sigma$, $\tilde{\bm{\mu}}$, $M$ defined above we have the following theorem,
\begin{theorem}\label{the:cond_rf}
Assume $\e_n(\mb{x})$ are orthonormal functions, $\lambda_n$ are positive values, $\xi_n$ are i.i.d standard normal random variables for $n=1,\dots,N_G$ and $rank(\Sigma) = N_m$ with $0<N_m<N_G$, then the conditional random field $\tilde{Y}_{N_G}(\mb{x},\w)$ can be represented by an expansion of $N_G-N_m$ i.i.d. standard normal random variables.
\end{theorem}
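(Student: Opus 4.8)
The plan is to exploit the structure of the matrix $\widetilde{M}$ uncovered in the proof of Lemma~\ref{le:rankM}, namely that it is \emph{symmetric and idempotent}, $\widetilde{M}^T = \widetilde{M}$ and $\widetilde{M}^2 = \widetilde{M}$. Such a matrix is an orthogonal projection, so all of its eigenvalues are either $0$ or $1$, and by Lemma~\ref{le:rankM} exactly $N_G - N_m$ of them equal $1$. The idea is that the random fluctuation in (\ref{def:cond_rf}), which enters only through the vector $\widetilde{M}\bm{\eta}$, therefore lives in an $(N_G-N_m)$-dimensional subspace, and that after an orthogonal change of variables it can be rewritten in terms of exactly $N_G - N_m$ independent standard normal coordinates.

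First I would write the spectral decomposition $\widetilde{M} = Q D Q^T$, where $Q$ is orthogonal and $D = \mathrm{diag}(1,\dots,1,0,\dots,0)$ has $N_G - N_m$ ones followed by $N_m$ zeros. Next I would introduce the rotated variables $\bm{\zeta} = Q^T \bm{\eta}$; since $\bm{\eta}$ is a vector of i.i.d.\ $\mathcal{N}(0,1)$ random variables and $Q$ is orthogonal, $\bm{\zeta}$ is again a vector of i.i.d.\ $\mathcal{N}(0,1)$ random variables. Then $\widetilde{M}\bm{\eta} = Q D Q^T \bm{\eta} = Q D \bm{\zeta}$ depends on $\bm{\zeta}$ only through its first $N_G - N_m$ entries $\zeta_1,\dots,\zeta_{N_G-N_m}$, the remaining coordinates being annihilated by $D$. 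Substituting this into (\ref{def:cond_rf}) and collecting terms, the stochastic part becomes $\sum_{j=1}^{N_G-N_m}\zeta_j\,\phi_j(\mb{x})$ with deterministic functions $\phi_j(\mb{x}) = \sum_{n=1}^{N_G}\sqrt{\lambda_n}\,\e_n(\mb{x})\,Q_{nj}$, while the deterministic mean term $\overline{Y}(\mb{x}) + \sum_{n=1}^{N_G} \sqrt{\lambda_n}\,\e_n(\mb{x})\,\tilde{\mu}_n$ is untouched. This exhibits $\widetilde{Y}_{N_G}(\mb{x},\w)$ as an expansion in exactly $N_G - N_m$ i.i.d.\ standard normal random variables, which is the claim.

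The step I expect to carry the real content is verifying that the reduced variables $\{\zeta_j\}$ are genuinely i.i.d.\ standard normal, and not merely uncorrelated or correctly counted. This rests on two facts working together: the orthogonal invariance of the standard Gaussian law (so that $Q^T\bm{\eta}$ remains standard multivariate normal), and the fact, established in Lemma~\ref{le:rankM}, that $\widetilde{M}$ is a genuine orthogonal \emph{projection} rather than an arbitrary symmetric matrix of rank $N_G - N_m$. The idempotency is what forces the nonzero eigenvalues to be exactly $1$; were $\widetilde{M}$ merely symmetric with the correct rank, the surviving coordinates would be rescaled by the square roots of the nonzero eigenvalues and one would have to track those factors to recover unit variance. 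Identifying $\widetilde{M}$ as a projection removes this complication, so once Lemma~\ref{le:rankM} is in hand the remaining argument is essentially a change of basis.
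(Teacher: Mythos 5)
Your proof is correct, but it follows a genuinely different route from the paper's. The paper works at the level of the conditional covariance \emph{function}: it computes $C_{\tilde{Y}}(\mb{x},\mb{y}) = \bm{\e}\Lambda^{1/2}\widetilde{M}\Lambda^{1/2}\bm{\e}^T$ (using the same symmetry and idempotency $\widetilde{M}\widetilde{M}^T=\widetilde{M}$ that you exploit), invokes Lemma~\ref{le:rankM} to conclude that this kernel has rank $N_G-N_m$, and then appeals to the KL expansion of the conditional field itself: the representation \eqref{def:cond_rf_KL} is written in terms of the eigenvalues $\tilde{\lambda}_i$ and orthonormal eigenfunctions $\tilde{\e}_i$ of $C_{\tilde{Y}}$, with independence and normality of the new coefficients coming from Gaussianity of the conditional field. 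You instead work directly on the random vector: spectral decomposition of the orthogonal projection $\widetilde{M}=QDQ^T$, rotation $\bm{\zeta}=Q^T\bm{\eta}$ (still i.i.d. $\mathcal{N}(0,1)$ by orthogonal invariance), and substitution into \eqref{def:cond_rf}. Your argument is more elementary and self-contained: it needs neither Mercer's theorem nor the fact that KL coefficients of a Gaussian field are independent, and it produces the $N_G-N_m$ i.i.d. standard normal variables by explicit construction. What it gives up is the KL structure: your spatial functions $\phi_j(\mb{x})=\sum_{n}\sqrt{\lambda_n}\,\e_n(\mb{x})\,Q_{nj}$ are in general not orthogonal in $L^2(D)$, since their Gram matrix is the leading block of $Q^T\Lambda Q$, which need not be diagonal because $\Lambda$ and $\widetilde{M}$ need not commute. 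That is perfectly adequate for the theorem as stated, which asks only for \emph{some} expansion in $N_G-N_m$ i.i.d. standard normals; but the paper's form \eqref{def:cond_rf_KL}, with orthonormal modes and ordered eigenvalues, is the one used downstream to build the conditional gPC surrogate \eqref{eq:cond_kappa}, so the two proofs are not interchangeable for the rest of the paper without an extra orthogonalization step on your $\phi_j$.
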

\begin{proof}
From Eq \eqref{def:cond_rf}, the covariance matrix of $\widetilde{Y}_{N_G}(\mb{x},\w)$, $C_{\tilde{Y}}$, can be written as
\begin{equation}\label{def:cond_cov}
\begin{split}
C_{\tilde{Y}}(\mb{x},\mb{y}) &= \E[(\tilde{Y}_{N_G}(\mb{x},\w)-\E(\tilde{Y}_{N_G}(\mb{x},\w)))(\tilde{Y}_{N_G}(\mb{y},\w)-\E(\tilde{Y}_{N_G}(\mb{y},\w)))]\\
&=\E[\sum_{n=1}^{N_G}\sqrt{\lambda_n}\e_n(\mb{x})\sum_{l=1}^{N_G} \tilde{m}_{n,l}\eta_l \sum_{s=1}^{N_G}\sqrt{\lambda_s}\e_s(\mb{y})\sum_{t=1}^{N_G} \tilde{m}_{s,t}\eta_t]\\
&=\sum_{n,s,l=1}^{N_G}\sqrt{\lambda_n}\e_n(\mb{x})\tilde{m}_{n,l}\sqrt{\lambda_s}\e_s(\mb{y}) \tilde{m}_{s,l}\\
&=\bm{\e}\Lambda^{1/2}\tilde{M} \tilde{M}^T\Lambda^{1/2}\bm{\e}^T\\
&=\bm{\e}\Lambda^{1/2}\tilde{M}\Lambda^{1/2}\bm{\e}^T
\end{split}
\end{equation}
Using Lemma \ref{le:rankM}, we write $rank(Cov_{\tilde{Y}}) = rank(\widetilde{M}) = N_G-N_m$. Consequently, there are only $N_G-N_m$ nonzero eigenvalues in the KL expansion of $\tilde{Y}_{N_G}(\mb{x},\w)$ that can be expressed as
\begin{equation}\label{def:cond_rf_KL}
\tilde{Y}_{N_G}(\mb{x},\w) \sim \overline{Y}(\mb{x})+\sum_{n=1}^{N_G}\sqrt{\lambda_n}\e_n(\mb{x})\tilde{\mu}_n+\sum_{i=1}^{N_G-N_m}\sqrt{\tilde{\lambda}_i}\tilde{\e}_i(\mb{x})\xi_i,
\end{equation}
where $\tilde{\lambda}_i$ and $\tilde{\e}_i(\mb{x})$, $i=1,\dots,N_G-N_m$ are eigenvalues and eigenfunctions of $Cov_{\tilde{Y}}$ respectively, and $\xi_i$, $i=1,\dots,N_G-N_m$ are i.i.d. standard Gaussian random variables.
\end{proof}
\begin{remark}
The dimensionality of the random field $\tilde{Y}_{N_G}(\mb{x},\w)$ is $N_G-N_m$ and is smaller than that of the original unconditioned random field, i.e., $N_G$.
Conditioning on observation reduces the complexity of the forward problem \eqref{eq:diffusion-deterministic}.
In addition to the dimensionality reduction, conditioning also reduces the variance of $\tilde{Y}_{N_G}(\mb{x},\w)$ \cite{ossiander2014}.
\end{remark}

%\subsection{Truncated "conditional" random field}
%In the case of smooth covariance kernel (such as Gaussian kernel with long correlation length), the random field $\kappa(\bm{x},\omega)$ can be fully resolved by moderate large $N_G$ KL terms, along with $N_m$ observations, the conditional random field $\tilde{\kappa}(\bm{x},\omega)$ is represented by KL expansion with $N_G-N_m$ terms.

\section{Conditional Stochastic Collocation Method}\label{sec:cond_gPC}
With the conditioned random conductivity $\tilde{\kappa}(\mb{x},\w) = \exp[\tilde{Y}_{N_G}(\mb{x},\w)]$, Eq \eqref{eq:diffusion} has $N_G-N_m$ random dimensions and can be efficiently solved with the gPC numerical methods when $N_G-N_m$ is relatively small. Given that the computational cost of gPC-based methods exponentially increases with the number of random dimensions, the conditioning significantly reduces the computational cost of gPC. Here, we assume that  $N_G-N_m$ is small (i.e., the number of observations is comparable to $N_G$), and use the conditional stochastic collocation method \cite{XiuH_SISC05} as a surrogate for Eq \eqref{eq:diffusion}. 
%A similar approach was used in \cite{MARZOUK2009}, where the unconditional gPC method was employed as a surrogate model for MCMC parameter estimation. 

%\subsection{Generalized polynomial chaos (gPC) stochastic collocation method}
%To further accelerate the approach, we employ the nonintrisive gPC collocation method to solve the forward diffusion problem and express the solution as a polynomial expansion of the random inputs which establish the conditional diffusivity field. 

\subsection{Generalized polynomial chaos}

Based on Eq (\ref{def:cond_rf_KL}), the conditional random field $\tilde{\kappa}(\bm{x},\omega)$ can be represented by KL expansion with $N_G-N_m$ terms as
\begin{equation}\label{eq:cond_kappa}
\begin{split}
\tilde{\kappa}(\mb{x},\w) &\sim \tilde{\kappa}(\mb{x},\bm{\xi}) \\ &=e^{\overline{Y}(\mb{x})+\sum_{n=1}^{N_G}\sqrt{\lambda_n}\e_n(\mb{x})\tilde{\mu}_n}e^{\sum_{i=1}^{N_G-N_m}\sqrt{\tilde{\lambda}_i}\tilde{\e}_i(\mb{x}){\xi}_i},
\end{split}
\end{equation}
and the ``conditional'' $u$, $\tilde{u}$, satisfies  
\begin{align}\label{eq:diffusion_cond}
\begin{split}
-\nabla \cdot (\tilde{\kappa}(\mb{x},\bm{\xi})\nabla \tilde{u}(\mb{x},\bm{\xi})) &= 0, \qquad \mb{x}\in D;\\
 \tilde{u}(\mb{x},\bm{\xi}) &= f(\mb{x}),\qquad \mb{x} \in \partial D_D;\\
 \vec{n}\cdot \tilde{\kappa}(\mb{x},\bm{\xi})\nabla  \tilde{u}(\mb{x},\bm{\xi})&= g(\mb{x}), \qquad \mb{x} \in \partial D_L.
\end{split}
 \end{align}

The gPC method for Eq (\ref{eq:diffusion_cond}) is based on an orthogonal polynomial approximation  of $\tilde{u}(\mb{x},\bm{\xi})$.
Let $\i = (i_1,\dots, i_N)\in \mathbf{N}_0^{n}$ be a multi-index with $|\i| = i_1+\dots+i_N$ and $P\geq 0$ be an integer. Then, the $P$th-degree gPC expansion of function $\tilde{u}(\mb{x},\bm{\xi})$ is defined as
\begin{equation}\label{eq:gPC_expan}
\tilde{u}(\mb{x},\bm{\xi}) \approx \sum_{|\i|=0}^P c_\i (\mb{x})     \Phi_{\i}(\bm{\xi}),
\end{equation}
where 
\[
c_\i (\mb{x}) = \E[\tilde{u}(\mb{x}, \bm{\xi})\Phi_\i(\bm{\xi})] = \int \tilde{u}(\mb{x} ,\bm{\xi})\Phi_\i(\bm{\xi})\rho(\bm{\xi})d\bm{\xi}
\]
are the coefficients of the expansion, $\Phi_\i(\bm{\xi})$ are the basis functions
\[
\Phi_\i(\bm{\xi}) = \phi_{i_1}(\xi_1)\dots\phi_{i_N}(\xi_N),\quad 0\leq|\i|\leq P,
\]
and $\bm{\xi} = (\xi_1,\dots,\xi_{N_G-N_m})$ is an $(N_G-N_m)$-dimensional random vector.
Here, $\phi_j(\xi_k)$ is the $j$th-degree one-dimensional orthogonal polynomial in $\xi_k$ direction satisfying
\[
\E[\phi_m(\xi_k)\phi_n(\xi_k)] = \delta_{m,n}, \quad 0\leq n,m\leq N.
\]
For identical independent distributed (i.i.d.) Gaussian random variables  $\{\xi_i\}_{i=1}^N$,  $\Phi_{\i}(\bm{\xi})$ are Hermite polynomials. The number of $\Phi_{\i}(\bm{\xi})$ is $( \begin{array}{c} N_G-N_m+P \\ P \end{array} )$ \cite{XiuK_SISC02,Xiu_CICP09}.
The classical approximation theory guarantees that the gPC approximation \eqref{eq:gPC_expan} converges to  $\tilde{u}(\mb{x},\bm{\xi})$ in $L^2$-norm as the degree $P$ increases when  $\tilde{u}(\mb{x},\bm{\xi})$ is square integrable with respect to the probability measure.

\subsection{Stochastic collocation method}
 In the stochastic collocation method, the expansion coefficients ${c}_{\i} (\mb{x})$ are approximated as
\begin{equation}\label{eq:coef_c}
{c}_{\i} (\mb{x}) \approx \tilde{c}_{\i} (\mb{x}) = \sum_{m = 1}^M \tilde{u}(\mb{x}, \bm{\xi}^{(m)})\Phi_{\i}(\bm{\xi}^{(m)})w^{(m)},
\end{equation}
where $\{\bm{\xi}^{(m)}\}_{m=1}^M$ is a set of quadrature points and $w^{(m)}, m = 1,\dots,M$ are the corresponding weights. 
For each collocation point $\bm{\xi}^{(i)}$, $u({\mb{x},\bm{\xi}^{(i)}})$ is obtained by solving \eqref{eq:diffusion_cond} with $\tilde{\kappa}(\mb{x},\bm{\xi})$ replaced by $\tilde{\kappa}(\mb{x},\bm{\xi}^{(i)})$.

Then, 
\begin{equation}\label{eq:uq_collo}
\tilde{u}(\mb{x},\bm{\xi})  \approx \tilde{u}_C(\mb{x},\bm{\xi}) = \sum_{|\i|=0}^P\tilde{c}_{\i}(\mb{x}) \Phi_\i(\bm{\xi})
\end{equation}
and the mean and variance of $\tilde{u}(\mb{x},\bm{\xi})$ can be approximated as
\begin{equation}\label{eq:fmean}
\E[\tilde{u}( \mb{x}, \bm{\xi})]\approx \E [\tilde{u}_C ( \mb{x}, \bm{\xi})] = \tilde{c}_0 (\mb{x})
\end{equation}
and 
\begin{equation}\label{eq:var}
\E[\tilde{u}(\mb{x}, \bm{\xi})-E[\tilde{u}(\mb{x},\bm{\xi})]]^2 \approx \E[\tilde{u}_C(\mb{x}, \bm{\xi})-E[\tilde{u}_C (\mb{x},\bm{\xi})]]^2 =\sum_{|\i|=1}^P \tilde{c}_{\i} (\mb{x})^2.
\end{equation}
%With all these quantities, we can analysis the sensitivity of the random function with respect to the random inputs.
There are various quadrature rules, including  tensor product quadrature rules for low-dimensional  $\bm{\xi}$ and the sparse grid methods for moderately dimensional $\bm{\xi}$ \cite{XiuH_SISC05,nobile2008sparse,Ma2009,jakemanG2013}. If the solution allows a low-dimensional representation, the compressed sensing can be used to decrease the number of collocation points  \cite{Yan_2012Sc,Yang_2013reweightedL1,Hampton_2015Cs}.  The convergence analysis of stochastic collocation methods is given in \cite{XiuH_SISC05,Xiu_CICP07}.

\begin{remark} In order to solve optimization problems, including (\ref{eq:opt_o}),(\ref{eq:opt_cond_rgPC}), and (\ref{eq:MAP}) (or to make the optimization problems well posed), additional measurements on solution (state) are required. 
Function $u(x,\omega)$ with random inputs $\omega$ and variance $\sigma^2_u(x)$ is more sensitive with respect to $\omega$ at position $x$ where the variance is larger.  Therefore, the optimal  location of $u(x,\omega)$ measurements for solving optimization problems should collocate with the local maxima of $\sigma^2_u(x)$.  In practice, only a few measurements are affordable; therefore, determining the optional measurement locations for the optimization problem is very important.% \textcolor{red}{This should be emphasized in Abstract, introduction and conclusions.}
\end{remark}
 
\section{Bayesian estimation of $\tilde{\kappa}(\mb{x},\bm{\xi})$}\label{sec:inverse_problem}

We assume that in addition to $\{\hat{\kappa}(\mb{x}^j)\}_{j=1}^{N_m}$ measurements of $\hat{\kappa}$, we have $\{\hat{u}(\mb{x}^j)\}_{j=1}^{N_k}$ measurements of $\hat{u}$. With the conditional KL model (\ref{eq:cond_kappa}), we approximate $\hat{\kappa}(\bm{x})$ as  $\hat{\kappa}(\bm{x}) \approx \tilde{\kappa}(\bm{x},\bm{\xi}^*)$, where $\bm{\xi}^*$ is the $(N_G - N_m)$  dimensional deterministic vector. Then, the estimation of $\hat{\kappa}(\bm{x})$ reduces to the estimation of $\bm{\xi}^*$. With the surrogate model $\tilde{u}_C(\mb{x},\bm{\xi})$, we estimate $\bm{\xi}^*$ by solving the following minimization problem:

\begin{equation}\label{eq:opt_cond_rgPC}
\bm{\xi}^* = \textrm{argmin}_{\bm{\xi}\in \mathbb{R}^{N_G-N_m}}\sum_{j=1}^{N_k}|\hat{u}(\mb{x}^j)-\tilde{u}_C(\mb{x}^j,\bm{\xi})|^2 + \lambda \|\bm{\xi}-\bm{\xi}^o\|^2,
\end{equation}
where the regularization term $ \|\bm{\xi}-\bm{\xi}^o\|^2$  was proposed in \cite{stuart15c} and $\bm{\xi}^o\in \mathbb{R}^{N_G-N_m}$ and $\lambda$ are the regularization parameters.  Solving this optimization problem using standard minimization methods could be computationally expensive for large $N_G-N_m$.  Therefore, we employ a Bayesian statistical approach to solve the optimization problem and provide the correspondence between the regularization parameters and the prior statistics of $\bm{\xi}$.

In Eq \eqref{eq:opt_cond_rgPC}, we assume that the errors $\{\hat{u}(\mb{x}^j)-\tilde{u}_C(\mb{x}^{j},\bm{\xi})\}_{j=1}^{N_k}$ are i.i.d. random variables, that is,
\begin{equation*}
\hat{u}(\mb{x}^{j})= \tilde{u}_C(\mb{x}^{j},\bm{\xi}) + {\delta_j}, \quad {j=1,\dots,N_k},
\end{equation*}
where $\delta_j$ $j=1,\dots, N_k$ is i.i.d. random variable with a certain density $p_{\delta}$. Following a common practice in parameter estimation, we assume that $p_{\delta}$ is a Gaussian density with mean $0$ and standard deviation $\sigma_{\delta}\ll 1$. Then, the likelihood takes the form 
\[
L(\bm{\xi}) =  \prod _{j=1}^{N_k}p_{\mb{\delta}}\left(\delta_j\right) = \prod _{j=1}^{N_k}p_{\delta}(\hat{u}(\mb{x}^{j})-\tilde{u}_C(\mb{x}^{j},\bm{\xi})).
\]
With $p_{\bm{\xi}}$ denoting the prior probability density for $\bm{\xi}$, we use Bayes' rule to obtain a posterior probability density for $\bm{\xi}$ given the observations $\{\hat{u}(x^{(j)})\}_{j=1}^{N_k}$:
\begin{equation}\label{def:para_pos}
p(\bm{\xi}|\hat{u}(\bm{x}^{j}),j=1,\dots,N_k) \propto p_{\bm{\xi}}\prod _{j=1}^{N_k}p_{\delta}(\hat{u}(\mb{x}^{j})-\tilde{u}_C(\mb{x}^{j},\bm{\xi})).
\end{equation}
The (prior) density $p_{\bm{\xi}}$ of $\bm{\xi}$ is usually chosen to be Gaussian, that is, $\bm{\xi}\sim \mathcal{N}(\bm{\xi}^{o},\theta \mb{I})$ with the hyper-parameter $\theta$ controlling the prior variance and $\bm{\xi}^o$ being the prior mean. Usually, the prior mean of $\bm{\xi}$ is set to $\bm{0}$. The posterior density of $\bm{\xi}$ with hyper-parameter is rewritten as,
\begin{equation}\label{eq:posterier_full}
\begin{split}
& p(\bm{\xi},\theta|\hat{u}(\bm{x}^{(j)},j=1,\dots,N_k)) \propto p_{\bm{\xi}}\prod _{j=1}^{N_k}p_{\delta}(\hat{u}(\mb{x}^{j})-\tilde{u}_C(\mb{x}^{j},\bm{\xi}))\\
& \propto \exp(-\frac{\sum_{j=1}^{N_k}|\hat{u}(\mb{x}^{j})-u_P(\mb{x}^{j},\bm{\xi})|^2}{2\sigma_{\delta}})\exp(-\frac{\|\bm{\xi}-\bm{\xi}^o\|^2}{2\theta}).
\end{split}
\end{equation}
Then, the maximum a posteriori (MAP) estimator of this posterior distribution is 
\begin{equation}\label{eq:MAP}
\textrm{argmin}_{\bm{\xi}\in \mathbb{R}^{N_G-N_m}}\frac{\sum_{j=1}^{N_k}|\hat{u}(\mb{x}^i)-\tilde{u}_C(\mb{x}^j,\bm{\xi})|^2}{2\sigma_{\delta}} + \frac{ \|\bm{\xi}-\bm{\xi}^o\|^2}{2\theta}.
\end{equation}
Note that \eqref{eq:MAP} is a special case of the minimization problem \eqref{eq:opt_cond_rgPC} with $\lambda = \frac{\sigma_{\delta}}{\theta}$.

 To generate posterior distributed sampling, we employ the widely used Differential Evolution Adaptive Metropolis (DREAM) MCMC toolbox developed by Vrugt\cite{VRUGT2016}. Given the prior $\bm{\xi}_0 = \bm{0}$, and $\theta = 1$ such that $\bm{\xi}$ is consistent to the form of Gaussian process \eqref{def:cond_rf_KL}, we generate the samples of $\bm{\xi}$ for the MCMC. For $\sigma_{\delta}$, we assume the errors are small, that is, we choose $\sigma_{\delta}$ to be about $1e-3$. Whenever a sample $\bm{\xi}$ is generated, the surrogate  $\tilde{u}_C(\bm{x}^i,\bm{\xi})$ is evaluated and the errors $|\hat{u}(\mb{x}^i)-\tilde{u}_C(\mb{x}^i,\bm{\xi})|^2$, $i=1,N_k$ in the  Metropolis Hastings algorithm are computed. Then, the DREAM MCMC toolbox is employed to generate samples of posterior distribution and the MAP probability estimate of this posterior distribution is used as an approximate solution of the optimization problem \eqref{eq:opt_cond_rgPC}. The numerical examples in Section \ref{sec:num}  show the accuracy of this approximation.
 
\subsection{Determining measurement locations of the state $u(\mb{x})$}\label{u_sampling}
The choice of $u(\mb{x})$ measurements $\{\hat{u}(\mb{x}^j)\}_{j=1}^{N_k}$ can impact the posterior distributions obtained through MCMC. Because $\sigma^2_u(\mb{x})$, the variance of $u(\mb{x},\bm{\xi})$,  describes the variability of $u(\mb{x},\bm{\xi})$ with respect to the parameters $\bm{\xi}$, we propose the following strategy:
\begin{itemize}
\item[1] Approximate $\sigma^2_u(\mb{x})$ by $\tilde{\sigma}^2_u(\mb{x})  = \sum_{|\mb{i}|=1}^P  \tilde{c}^2_{\mb{i}}(\mb{x})$.
\item[2] Find all local maxima and saddle points on the $\tilde{\sigma}^2_u({\mb{x}})$ denoted by $S:=\{ \tilde{\sigma}^2_u({\mb{x_*^j}})\}_{j=1}^{N_{\ell}}$ in descending order. 
\item[3] If $N_{\ell}\geq N_k$, we choose the first $N_k$ locations from ordered set $S$.
\item[4] If ${N_{\ell}}<N_k$, divide the physical domain $D$ in $N_k$ equal blocks.
For blocks without saddle points from $S$, find  $\sigma^2_u(\mb{x})$ maxima in this block, arrange them in descending order, and choose the first $N_{k}-N_{\ell}$ locations to add to $S$. 
\end{itemize}
 The numerical examples show that this strategy outperforms  equal or random distribution of $u(\bm{x})$ measurements.  
% \textcolor{red}{Can you provide more details? What do you sample? What do you compute?}

%We demonstrate that the inverse problem \eqref{eq:opt_o} can be formulated in a Bayesian framework, and the MAP estimator of the posterior distribution can be the solution to the original inverse problem. The posterior distribution can be explored via any feasible sampling strategy, for example MCMC. 
%The resulting joint posterior distribution of the model parameters can be interrogated in assorted approaches. 
%In this paper, we seek the joint maximum or posterior mean \textcolor{red}{(of what? $\bm{\xi}?$ (it is zero) } to estimate $\hat{\kappa}(\mb{x})$. 
%\textcolor{red}{Please explain how you compute ``the joint maximum or posterior mean'' from \eqref{def:para_pos}.}

\section{Numerical examples}\label{sec:num}
We implement the proposed parameter estimation approach for one- and two-dimensional steady-state diffusion equations. We assume that the ``reference'' diffusion coefficient is an instance of the known Gaussian process that can be accurately represented with the the $N_G$-dimensional KL expansion \eqref{def:truncated_KL}. As a result, the reference $\kappa$ filed lies in the space of the conditional KL expansion (\ref{eq:cond_kappa}). 

 %correlation fields with two different correlation lengths. 

%\textcolor{red}{Different from the other presentations, since in this paper the solution exists to each example and the solution itself is finite and low dimensional.}\textcolor{blue}{I do not understand the first part of this statement. The second part: a finite dimensional solution is assumed in all  numerical studies. I do not see why we need this statement.} 
%As a result, we take the most probable point which is the mean of the posterior distribution of the parameters as the solution to the inverse problem and examine the accuracy of our approach. \textcolor{red}{Please explain this statement, i.e., reformulate the optimization problem \eqref{eq:opt_o} in terms of ``the mean of the posterior distribution of the parameters.''}   
%\textcolor{red}{Do you use MCMC to compute ``the joint maximum or posterior mean'' from \eqref{def:para_pos}? Explain.}
 
\subsection{One-dimensional example}
Consider the following one-dimensional steady-state diffusion equation with the unknown coefficient $\hat{\kappa}(x)$ modeled as the random field $\kappa(x,\omega)$:
\begin{equation}\label{eq:diffu_1d}
\begin{split}
\frac{\partial }{\partial x}(\kappa(x,\omega)\frac{\partial}{\partial x}u(x,\omega)) &= 0, \quad x\in(0,1)\\
u(0,\omega) = u_{l},\quad u(1,\omega) &= u_r, 
\end{split}
\end{equation}
where $u_l= 0$ and  $u_r = 2$.
We assume that $\kappa(x,\omega)$ has lognormal distribution (i.e, $g(x,\omega) = \log(\kappa(x,\omega))$  is Gaussian) with mean $\mu_k = 5.0$,  variance $\sigma_k = 2.5$ and the covariance function 

\begin{equation}
C(x_1,x_2) = e^{-\frac{(x_1-x_2)^2}{L^2}}, \quad x_1,x_2\in [0,1],
\end{equation} 
where the correlation length is $L = 0.05$ (i.e., $L$ is 20 times smaller than the domain size). 

We find that the finite KL expansion with $N_G=25$ terms,
\begin{equation}\label{eq:g_KL}
g(x,\omega) = \mu_g+ \sigma_g\sum_{i=1}^{N_G=25}\sqrt{\lambda_{i}}\epsilon_i(x)\xi_i,
\end{equation}
captures $95\%$ of the spectrum of this field covariance function. Here,  the mean $\mu_g$ and standard deviation $\sigma_g$ of $g(x,\omega)$ are given in terms of  $\mu_g$ and $\sigma_g$ as
\begin{equation}\label{eq:mu_sigma}
\begin{split}
\mu_g &= \log\mu_k-\frac{1}{2}\log{(\frac{\sigma_k^2}{\mu_k^2}+1)},\\
\sigma_g &=\sqrt{\log{(\frac{\sigma_k^2}{\mu_k^2}+1)}}.
\end{split}
\end{equation}

Then, we compute the reference conductivity field ${\hat{\kappa}}(x)$ as a realization of Eq (\ref{eq:g_KL}) and solve Eq (\ref{eq:diffu_1d})  with the finite element method to obtain the reference solution, $\hat{u}(x)$.  
%We choose one realization of this random field as the reference diffusion coefficient $\hat{\kappa}(x)$ and $\hat{u}(x)$ is the solution respectively. 
Next, we select $N_\kappa$ values of $\hat{\kappa}(x)$ and $N_u$ values of $\hat{u}(x)$ as measurements of these fields and reconstruct  $\hat{\kappa}(x)$ as $\tilde{\kappa}(x,\bm{\xi}^*)$ by solving the minimization problem (\ref{eq:MAP}). The conditioned surrogate model $\tilde{u}_C$ in (\ref{eq:MAP}) is constructed by solving Eq (\ref{eq:diffu_1d}) with the finite element method at collocation points. 
In the finite element solutions, the domain is discretized with $256$ equal size elements. We study the relative error of the inferred conductivity $\tilde{\kappa}(x,\bm{\xi}^*)$,
\begin{equation}\label{relative_er}
{\varepsilon(x) = \frac{|\tilde{\kappa}(x,\bm{\xi}^*)-\hat{\kappa}(x)|}{\hat{\kappa}(x)}.}
\end{equation}
as a function of $N_\kappa$, $N_u$, and the sampling strategy. 

We consider three  $\hat{\kappa}(x)$ sampling strategies: Case 1, the observation locations of $\hat{\kappa}(x)$  are chosen randomly; Case 2,  the observation locations are equally distributed; and Case 3, the observation locations coincide with locations of local minima and maxima. The referenced field $\kappa(x)$ and the locations of observations for each case are presented in Figure \ref{fig:1d_k_obs_dist}a.
\begin{figure}[htbp]
\centerline{
\psfig{file = 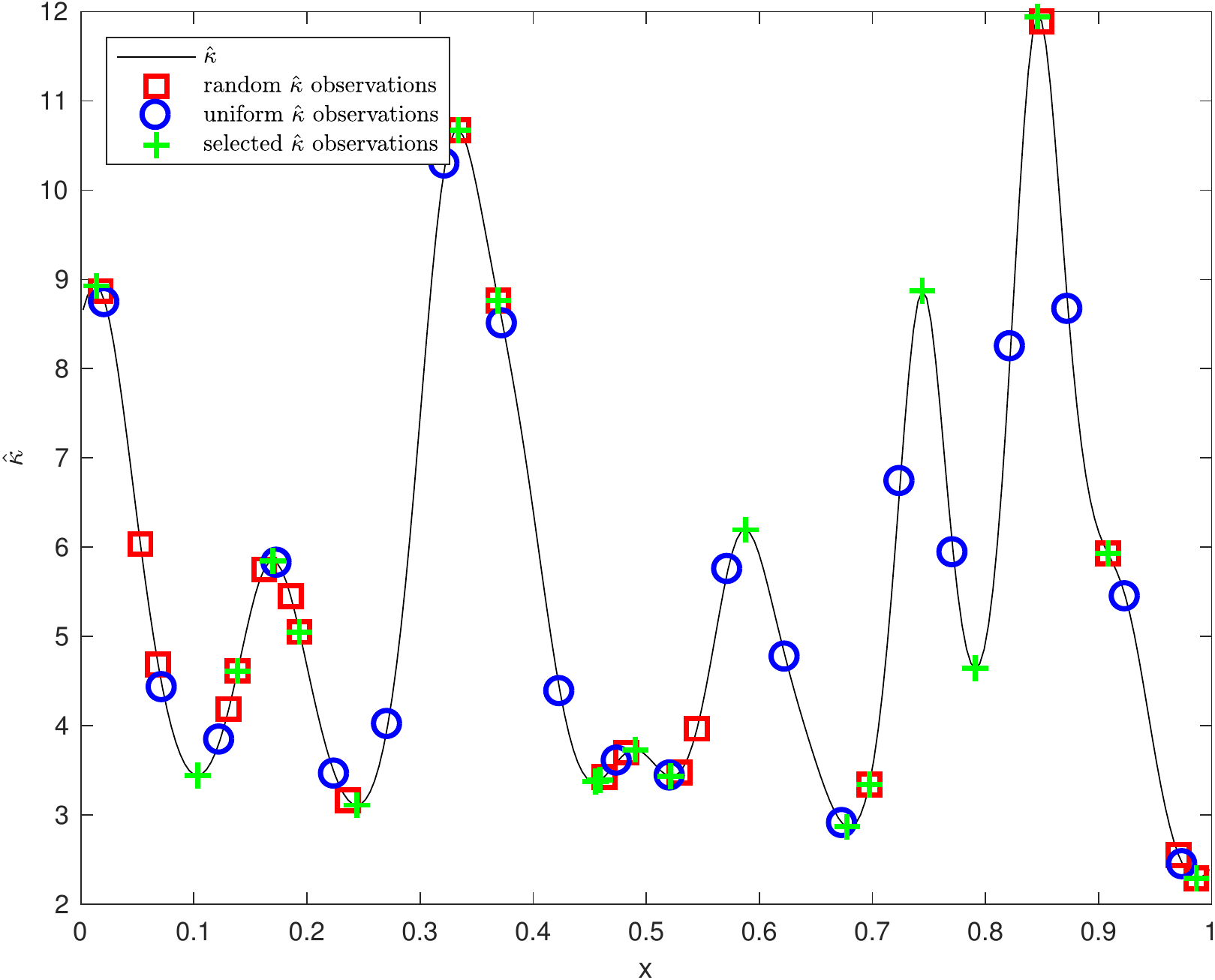, width = 7cm}
\psfig{file = 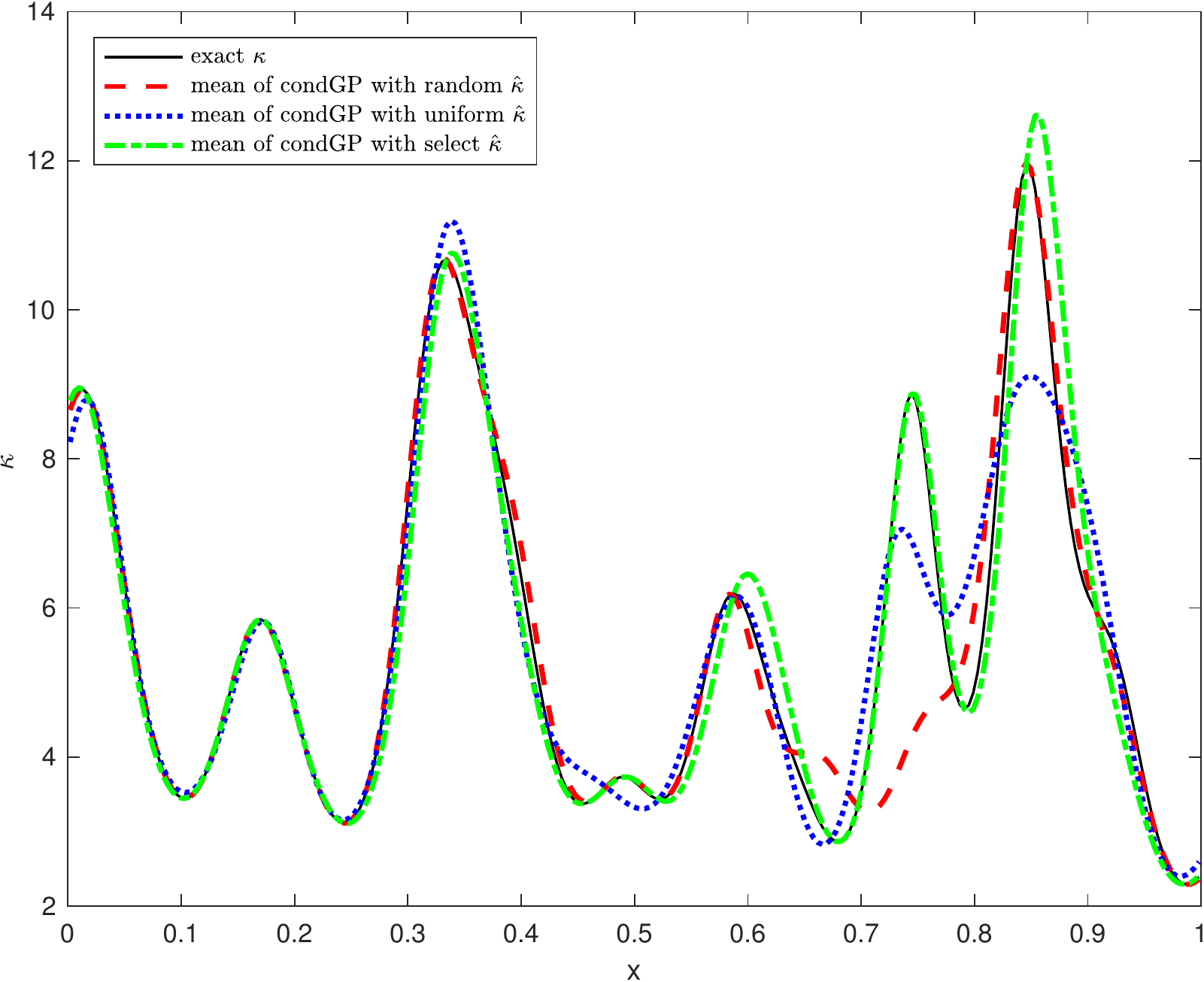, width = 7cm}
}
\caption{Reference field $\hat{\kappa}(x)$ and the observation locations for Cases 1, 2 and 3 (left) and the mean of the conditional GP field constructed in Case 1, 2, 3 (right).}
\label{fig:1d_k_obs_dist}
\end{figure}

We also study the effect of $\hat{u}$ measurement locations on the relative error $\varepsilon(x)$. As suggested in Remark 3, $u(x,\bm{\xi})$ is more sensitive with respect to the random inputs $\bm{\xi}$ where its variance is  large. Therefore, it reasonable to assume that the best candidate locations of $\hat{u}$ measurements are those where $\sigma^2_u (x)$ has local maxima. The $u$ variance $\sigma^2_u (x)$ can be evaluated with the conditional gPC method using Eq \eqref{eq:var}. 
%On the other hand, spreading the locations of $\hat{u}$ measurements should benefit 
On the other hand, the MCMC optimization algorithm requires $u$ measurements to be uncorrelated. %Rigorous theoretical study of such impacts is left for the future study. 
Therefore, we compare the accuracy of the $u$ sampling strategy outlined in Section \ref{u_sampling} (based on $\sigma^2_u (x)$ local maxima) with uniform and random sampling strategies. 

\subsubsection{Case 1: Random-chosen locations of the conductivity measurements}
Here we assume that no expert knowledge about the conductivity field is available and the 20 sampling locations of $k$ are chosen randomly, as shown in Figure \ref{fig:1d_k_obs_dist} by red square markers. It can be seen that $\hat{\kappa}$ oscillates significantly between  the sampling points and cannot be accurately learned via regression only, as shown in Figure \ref{fig:1d_k_obs_dist}b.  

%With the prior standard Gaussian distribution of $\bm{\xi}$, the probability to drawn the samples that accurately reproduce such the realization is small and we can see the mean of the conditional field represented by the red curve. 

To chose the $u$ measurement locations, we compute the variance of $u(x,\bm{\xi})$ using \eqref{eq:var} (shown in Figure \ref{fig:inverse_1d_randomk_uvar}) and then perform the following three tests: 
\begin{itemize}
\item[(a)] We randomly choose six locations of the reference solution $\hat{u}$ for the optimization step as shown in Figure \ref{fig:inverse_1d_randomk_uvar} by red ``$\square$'' markers. 
\item[(b)] We choose six equally distributed locations of $\hat{u}$, as shown in Figure \ref{fig:inverse_1d_randomk_uvar} by blue ``$\circ$'' markers. 
\item[(c)] We choose six locations using algorithm in Section \ref{u_sampling} as shown in Figure \ref{fig:inverse_1d_randomk_uvar} by green ``$+$'' markers. 
\end{itemize} 
Figure \ref{fig:inverse_1d_randomk_diff_u}(a) shows the reference and estimated conductivity field. The associated $\varepsilon(x)$ errors  are presented in  Figure \ref{fig:inverse_1d_randomk_diff_u}(b).
We can see that in all three tests, the errors are largest between the two cross markers where there are no $\hat{\kappa}$ measurements. On the other hand,  optimally selected locations of $\hat{u}$ in test (c) improve parameter estimation, i.e., reduce the  relative errors $\|\varepsilon\|_{L^{\infty}}$ from more than $60\%$  (for randomly or equally distributed $\hat{u}$ measurements) to less than $14\%$. 

\begin{figure}[htbp]
\centerline{
\psfig{file = 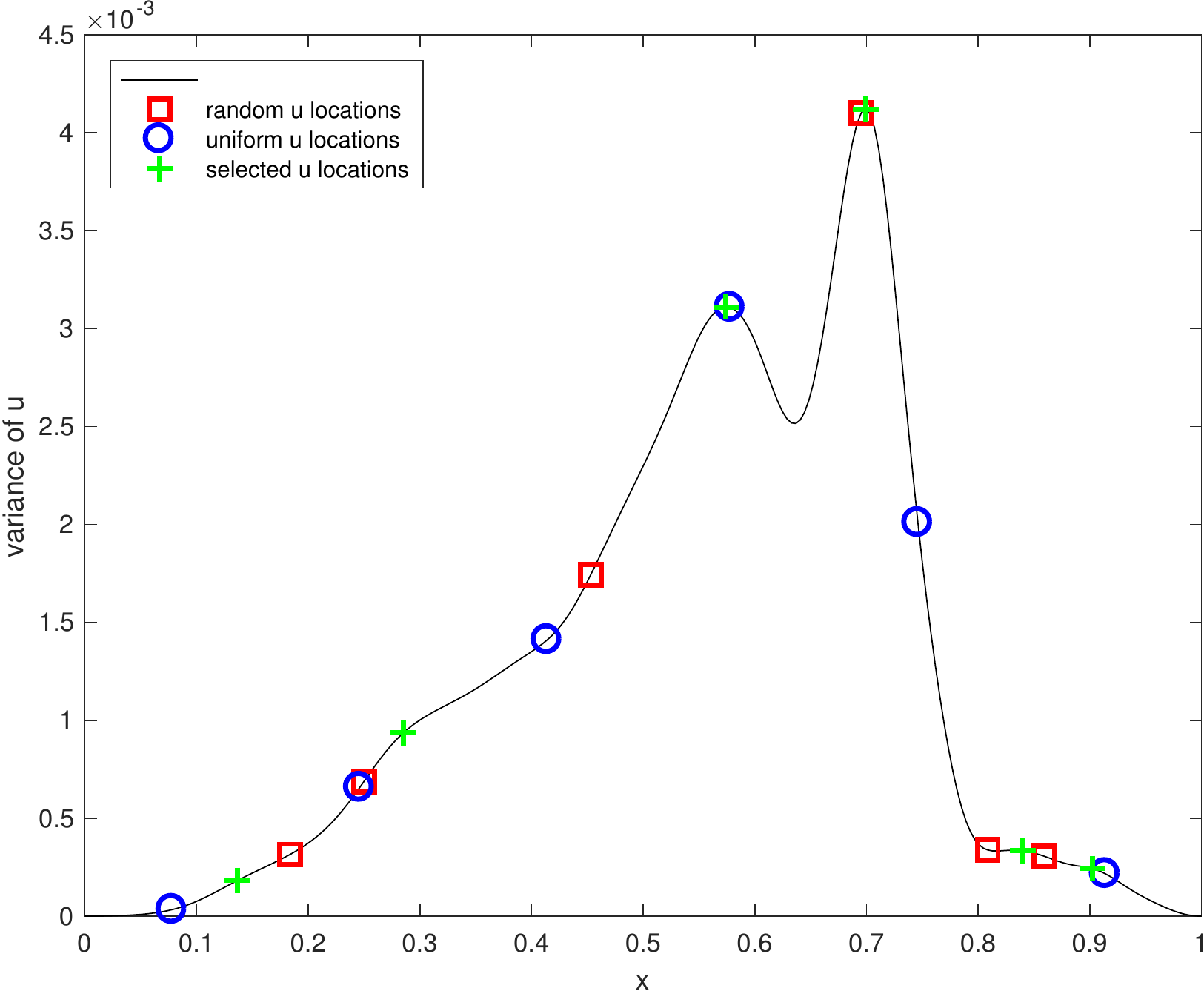, width = 7cm}
}
\caption{Example 1 Case 1 (randomly distributed $\kappa(x)$ measurements). 
Symbols denote locations of $u$ measurements selected randomly, uniformly, and based on $\sigma^2_u(x)$.  Black line denotes $\sigma^2_u(x)$.}
\label{fig:inverse_1d_randomk_uvar}
\end{figure}

\begin{figure}[htbp]
\centerline{
\psfig{file = 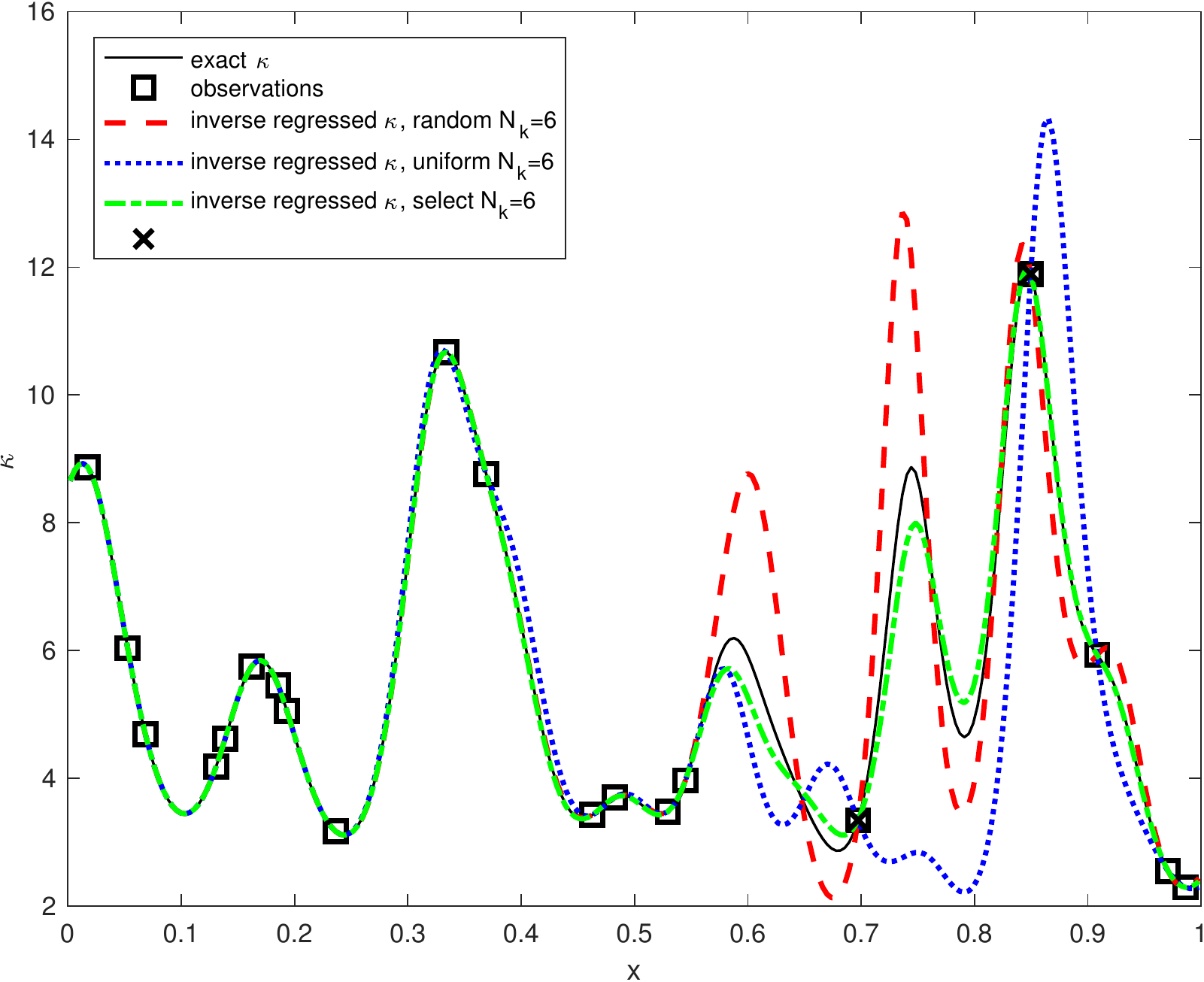, width = 7cm}
\psfig{file = 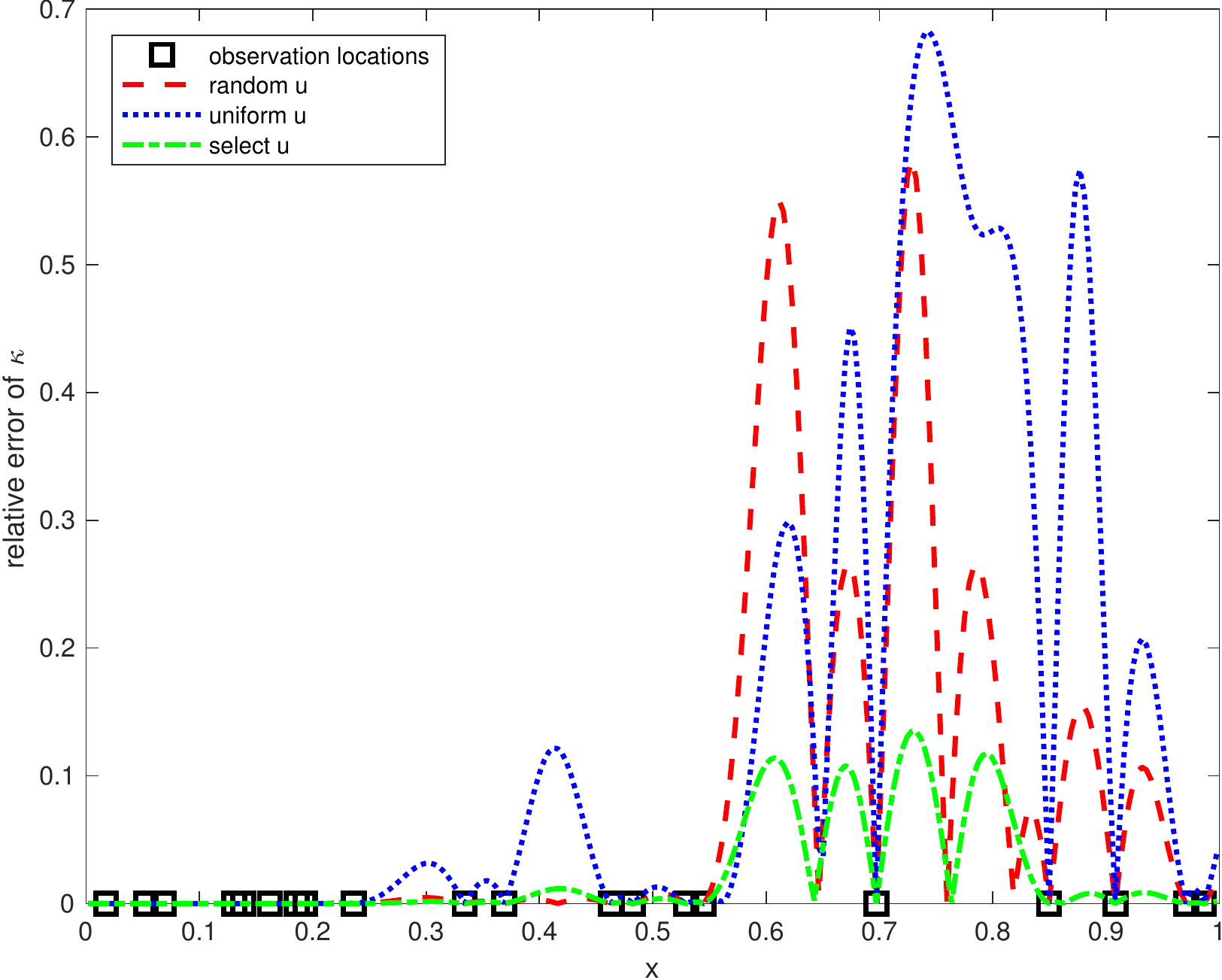, width = 7cm}
}
\caption{Example 1 Case 1. Estimated $\hat{\kappa}$ using  $u$ measurement locations obtained with three $u$ sampling strategies. Locations of $k$ measurements are shown with open square markers.}
\label{fig:inverse_1d_randomk_diff_u}
\end{figure}

\subsubsection{Case 2:  Observations of the conductivity coefficient are equally distributed} 
Next, we consider $\kappa$ observation locations equally distributed, as shown in Figure \ref{fig:1d_k_obs_dist}a by blue ``$\circ$'' markers. We test the same three strategies for selecting $u$ measurement locations as in Case 1, including randomly and uniformly distributed locations and locations chosen based on the local maxima of $\sigma^2_u(x)$. The resulting distributions of the $u$ measurements locations as well as $\sigma^2_u(x)$ are shown in Figure \ref{fig:inverse_1d_uniformk_uvar}. 
\begin{itemize}
\item[(a)] We randomly choose six locations for the measurements of $\hat{u}$, as shown in Figure \ref{fig:inverse_1d_uniformk_uvar} by red $\square$ markers. 
\item[(b)] We choose six equally distributed locations, 
 as denoted in Figure \ref{fig:inverse_1d_uniformk_uvar} by blue $\circ$ markers. 
\item[(c)] We  choose six locations  using algorithm in Section \ref{u_sampling}, as shown in Figure \ref{fig:inverse_1d_uniformk_uvar} by green $+$ markers. 
\end{itemize} 

\begin{figure}[htbp]
\centerline{
\psfig{file = 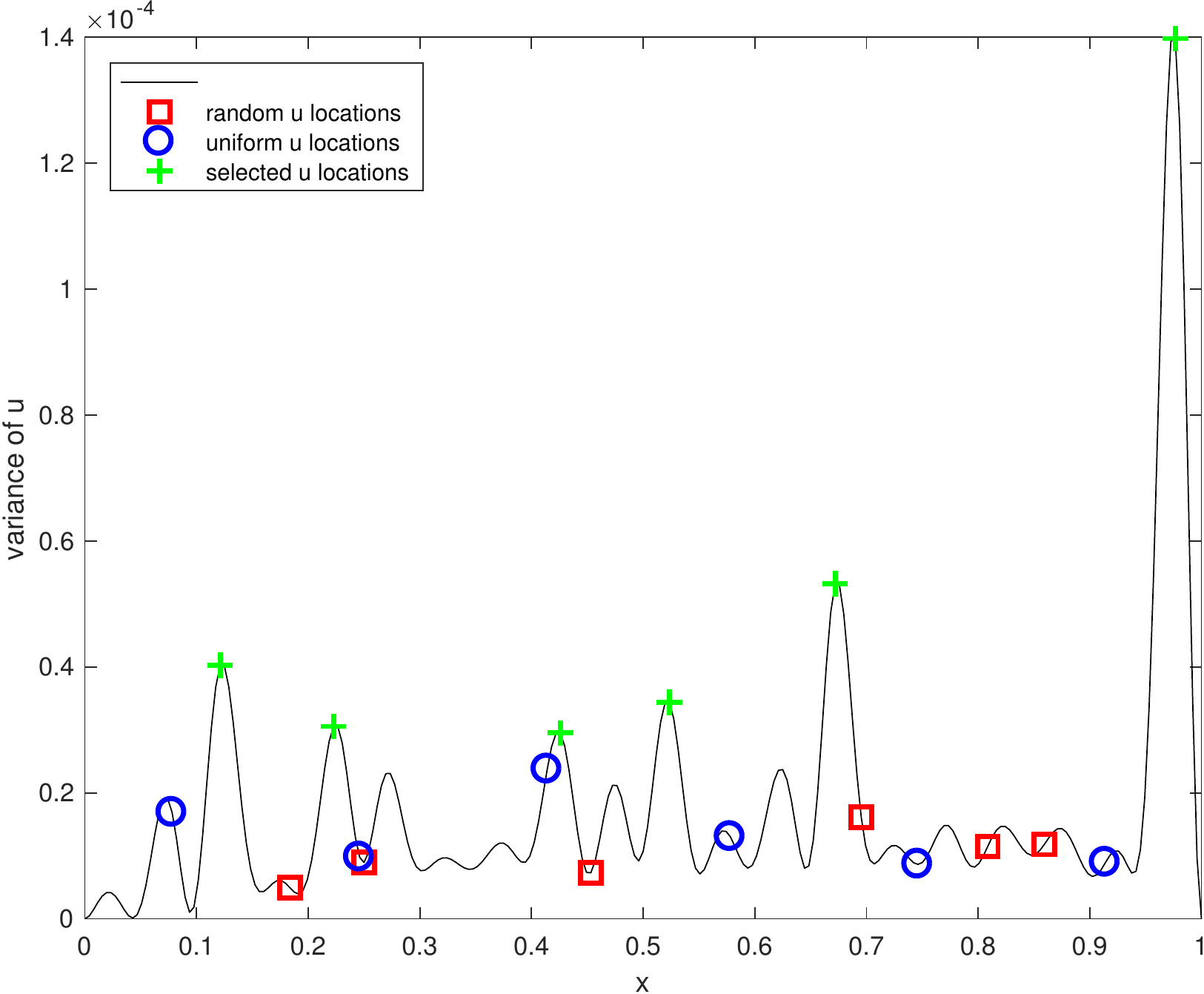, width = 7cm}
}
\caption{Example 1 Case 2 (uniformly distributed $\kappa(x)$ measurements). 
Symbols denote locations of $u$ measurements selected randomly, uniformly, and based on $\sigma^2_u(x)$.  Black line denotes $\sigma^2_u(x)$.}
\label{fig:inverse_1d_uniformk_uvar}
\end{figure}

\begin{figure}[htbp]
\centerline{
\psfig{file = 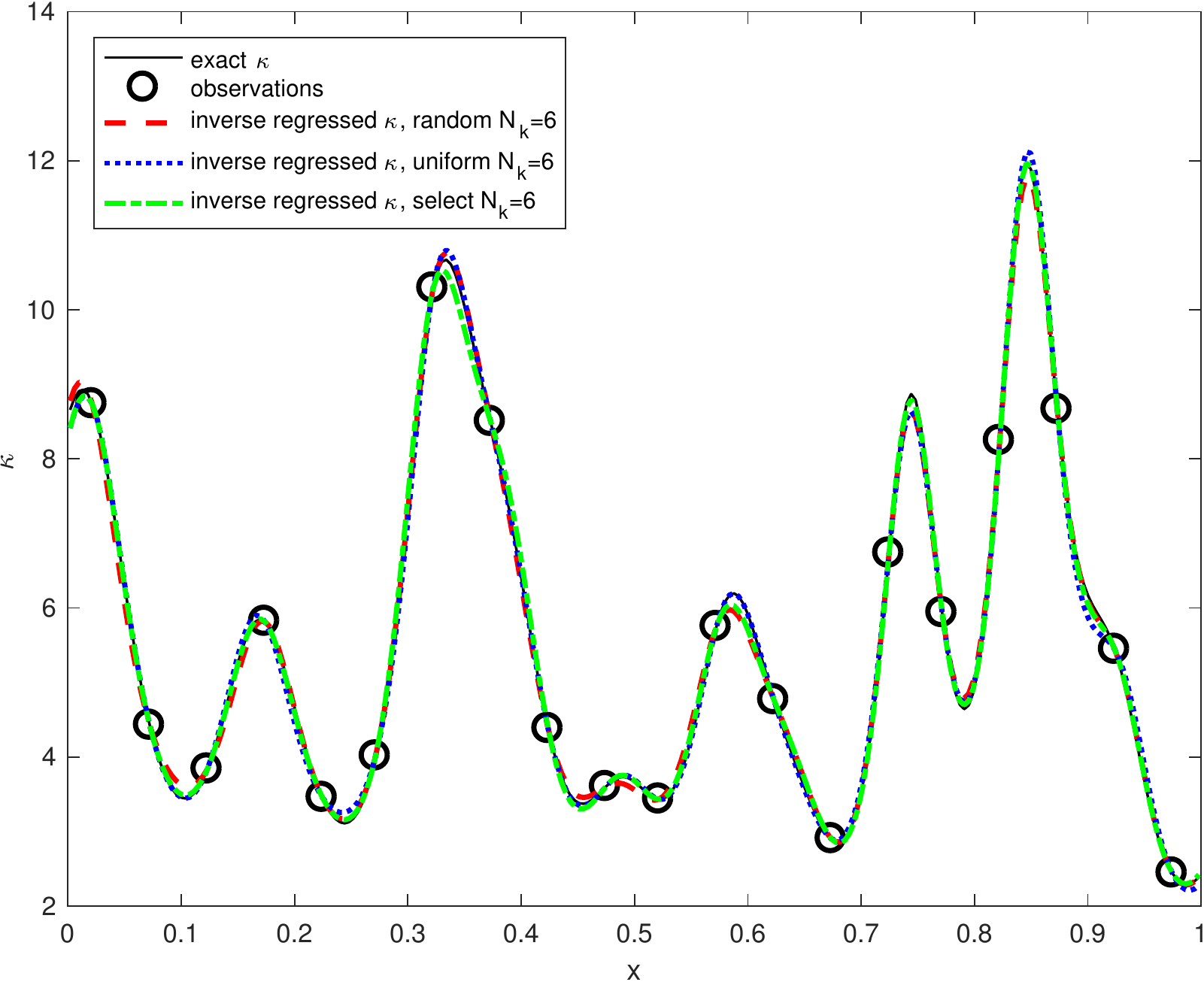, width = 7cm}
\psfig{file = 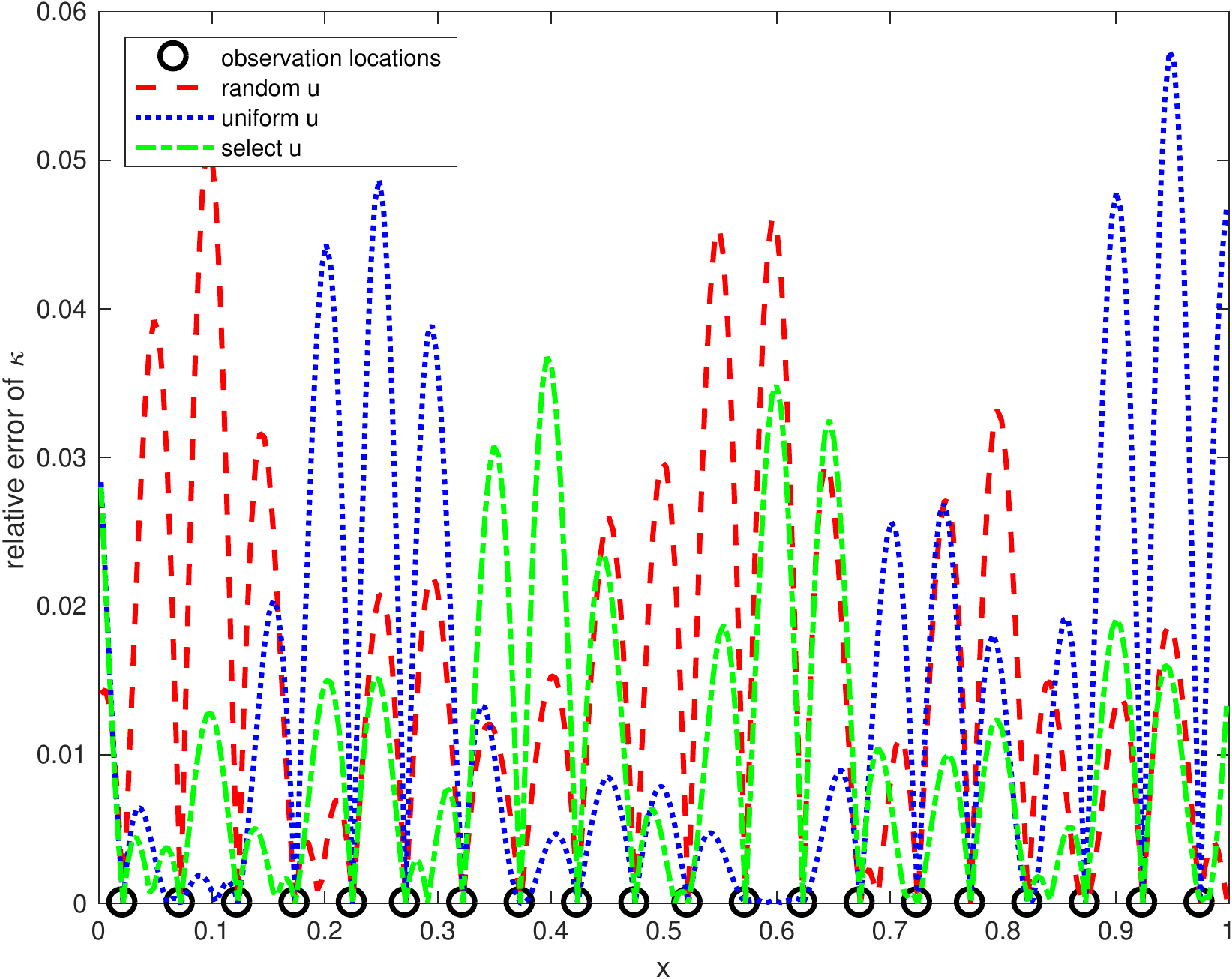, width = 7cm}
}
\caption{Example 1 Case 2. Regressed $\hat{\kappa}$ with uniform observations using  different state $u$ measurements.}
\label{fig:inverse_1d_uniformk_diff_u}
\end{figure}

Figure \ref{fig:inverse_1d_uniformk_diff_u}a shows the reference and estimated conductivity fields  and Figure \ref{fig:inverse_1d_uniformk_diff_u}b presents the associated relative errors $\varepsilon(x)$.
Here, the relative errors in $\hat{\kappa}(x)$ are less than $5\%$ for all considered choices of $u$ measurement locations.
The comparison of relative errors in Figure \ref{fig:inverse_1d_uniformk_diff_u}(right) indicates that the selection of the $\hat{u}$ measurement locations based on $\sigma^2_u(x)$ has the smallest $\|\varepsilon(x)\|_{\infty}$ error. 

\subsubsection{Case 3: Observations of $\kappa$ are colocated with local maxima and minima of $\hat{k}(x)$}
Here, we assume that the locations of local maxima and minima of $\hat{\kappa}(x)$ are known (e.g., based on expert knowledge). Then, we take observations at  all $14$ local minima and local maxima and one inflection point of $\hat{\kappa}(x)$. Another five observations of $\hat{\kappa}(x)$ are taken at random locations. The selected observations are shown by green ``$+$'' markers in Figure \ref{fig:1d_k_obs_dist}a. 
%For this choice of observations, the conditional KL expansion of $\kappa$  provides a very good approximation, i.e., the mean of the conditional KL expansion is close to the reference $\kappa$ and its variance is relatively small. The mean of the conditional KL field (i.e., the mean of the conditional GP (condGP)) of $\kappa$ for the three choices of $\hat{\kappa}$ measurements are presented in Figure \ref{fig:1d_k_obs_dist}(right).  We show later in this section that this choice improves the performance of the Bayesian parameter estimation of $\kappa(x)$.

\begin{figure}[htbp]
\centerline{
\psfig{file = 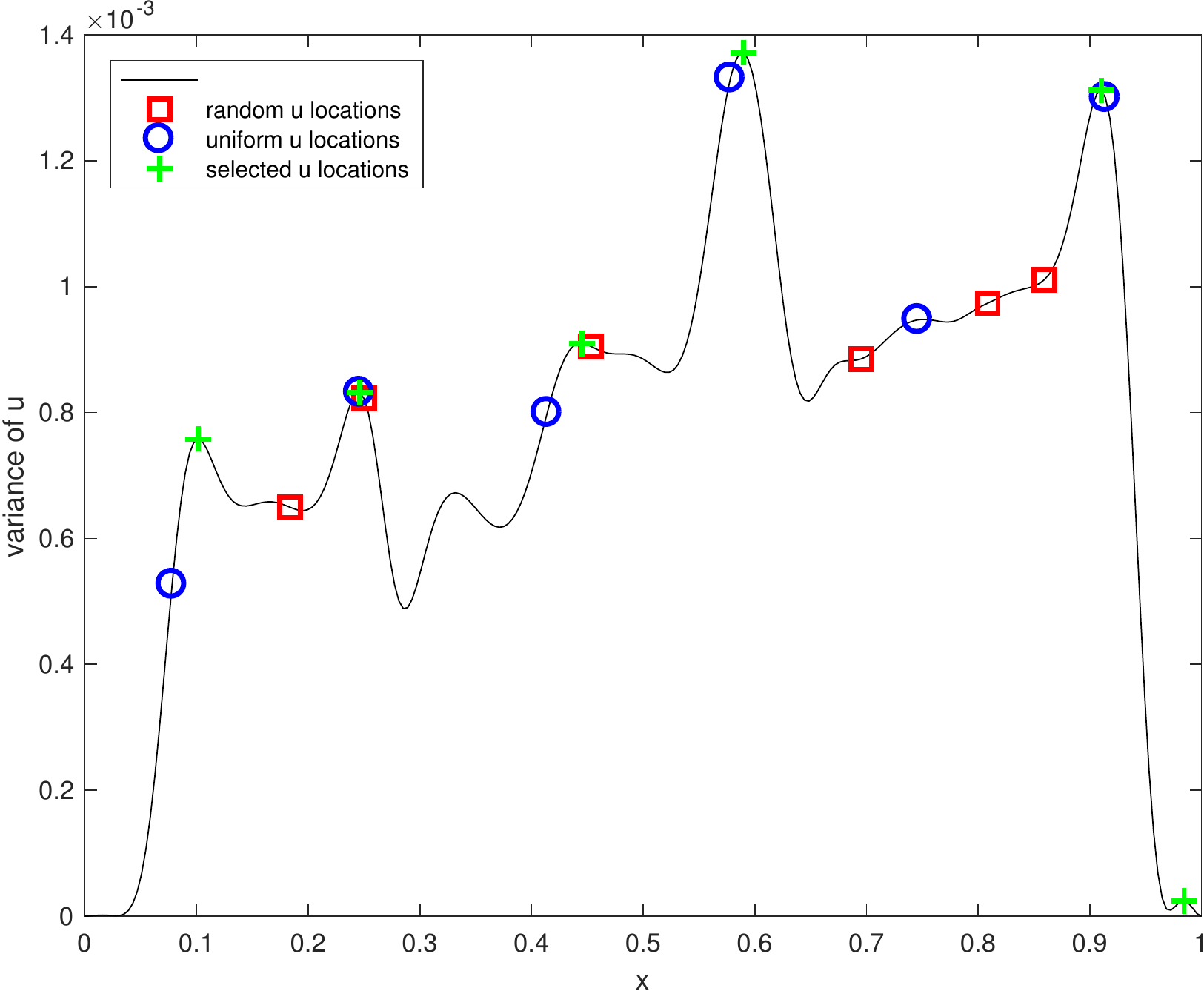, width = 7cm}
}
\caption{Example 1 Case 3 ($\kappa$ measurements collocated with local maxima and minima of $\kappa(x)$). 
Symbols denote locations of $u$ measurements selected randomly, equally, and based on $\sigma^2_u(x)$.  Black line denotes $\sigma^2_u(x)$.}
\label{fig:inverse_1d_selectk_uvar}
\end{figure}

\begin{figure}[htbp]
\centerline{
\psfig{file = 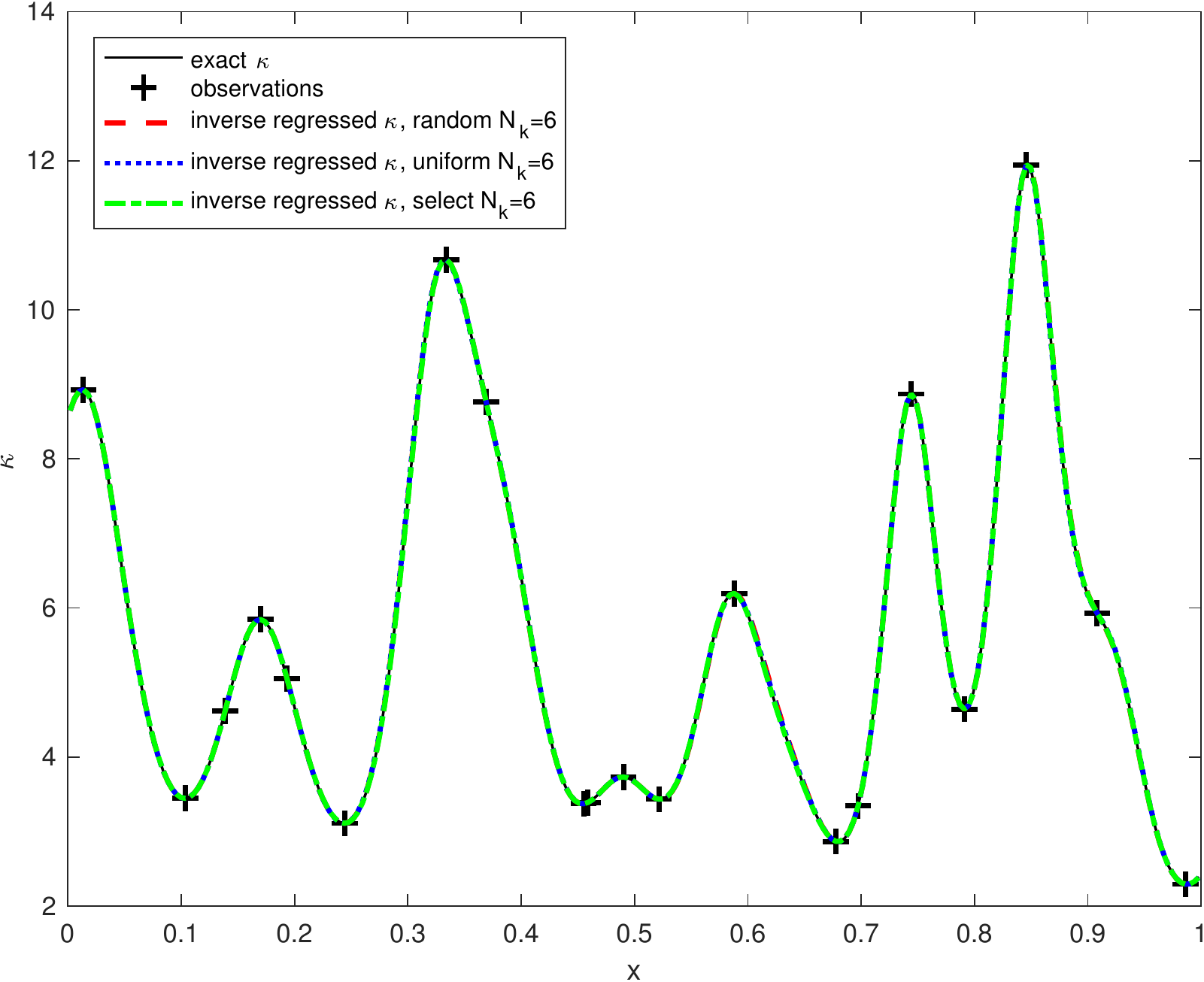, width = 7cm}
\psfig{file = 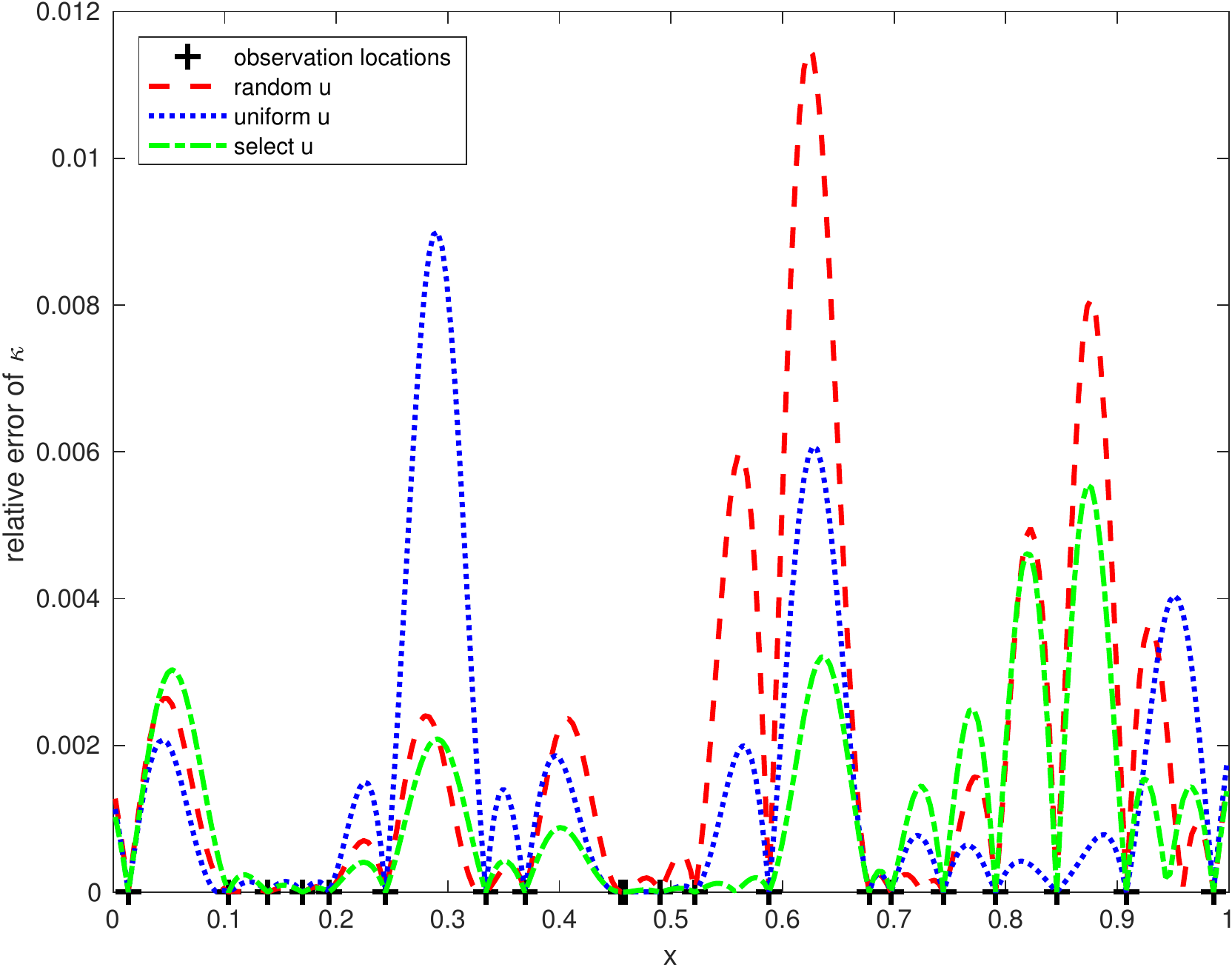, width = 7cm}
}
\caption{Example 1 Case 3. Regressed $\hat{\kappa}$ with select observations using  different state $u$ measurements.}
\label{fig:inverse_1d_selectk_diff_u}
\end{figure}

We test the same three strategies for selecting the measurement locations of $\hat{u}(x,\xi)$ as in Cases 1 and 2. Figure 
 \ref{fig:inverse_1d_selectk_uvar} shows the conditional variance of $u$ and $u$ measurement locations selected randomly, uniformly, and according to the conditional variance of $u$.
 Figure \ref{fig:inverse_1d_selectk_diff_u} shows the reference and estimated $\kappa$ as well as the corresponding errors.  
For all considered $u$ sampling strategies, $\varepsilon(x)$ are less than $1.2\%$.
Among them, the $u$ measurement locations selected according to $\sigma^2_u(x)$ give the smallest $\|\varepsilon(x)\|_{\infty}$, which is less than $0.6\%$.

\subsection{Two-dimensional example with large correlation length of $\kappa$ (smooth $\kappa$ field)}
Here, we consider the two-dimensional diffusion equation with the partially observed coefficient $\kappa$ modeled as a random field $\kappa(\mb{x},\omega)$,
\begin{equation}\label{eq:diffusion_2d}
\nabla \cdot (\kappa(\mb{x},\bm{\xi})\nabla u(\mb{x},\bm{\xi})) = 0, \qquad \mb{x}=(x_1,x_2)\in D,\\
\end{equation}
subject to the boundary conditions
\begin{equation}\label{eq:diff_2d_bd}
\begin{split}
&u(0,x_2,\bm{\xi}) = 50,\quad u(240,x_2,\bm{\xi}) = 25,\\
&-\mb{n}\cdot (\kappa(\mb{x},\bm{\xi})\nabla u)|_{(x_1,0)} = \mb{n}\cdot (\kappa(\mb{x},\bm{\xi})\nabla u)|_{(x_1,60)} = 0, 
\end{split}
\end{equation}
where $D = [0,240]\times[0,60]$ and $\mb{n} = (0,1)$.

We assume that $\kappa(\mb{x},\bm{\xi})$ is a lognormal field with mean $\mu_\kappa = 5$, standard deviation $\sigma_\kappa =2.5$, and $g(\mb{x},\bm{\xi}) = \log \kappa(x,\bm{\xi})$ is given by the truncated KL expansion:
$$g(\mb{x},\bm{\xi}) = \mu_g+ \sigma_g\sum_{i=1}^{25}\sqrt{\lambda_{i}}\epsilon_i(\mb{x})\xi_i,$$
where $\mu_g$ and $\sigma_g$ are computed by \eqref{eq:mu_sigma}, and $(\lambda_{i}, \epsilon_i(\mb{x}))$ are the first 25 eigenpairs that in total capture 95\% of spectra of the exponential correlation function
\begin{equation}\label{fig:2d_cov}
C(\mb{x}^{(1)},\mb{x}^{(2)}) = e^{-\frac{|x_1^{(1)}-x_1^{(2)}|}{L_1}-\frac{|x_2^{(1)}-x_2^{(2)}|}{L_2}}, \quad \mb{x}^{(1)},\mb{x}^{(2)}\in D.
\end{equation} 
Correlation lengths in the $x_1$ and $x_2$ directions are $L_1 = 240$ and $L_2 = 100$, which are of the order of the domain size and result in a relatively smoothly varying $\kappa(\mb{x},\bm{\xi})$ field shown in Figure \ref{fig:inverse_2d_lgCov_randomk}a.  

We choose one realization $\hat{\kappa}(\mb{x}) = \kappa(\mb{x},\bm{\xi}^*)$ as the reference diffusion coefficient and $\hat{u}(\mb{x},\bm{\xi}^*)$, the corresponding solution of Eqs \eqref{eq:diffusion_2d} and \eqref{eq:diff_2d_bd}, as the reference hydraulic head. Finite volume method with uniform mesh ($80\times 20$ elements) is adopted to solve Eqs \eqref{eq:diffusion_2d}  and \eqref{eq:diff_2d_bd}.
We consider two sets of $\hat{\kappa}$ observations, inlcuding a randomly sampled $\hat{\kappa}$ and $\hat{\kappa}$ sampled based on the expert knowledge of local maxima and minima of $\hat{\kappa}$. For each distribution of $\hat{\kappa}$ measurements, we consider two sets of $\hat{u}$ measurements, randomly located and chosen based on $\sigma^2_u(\mb{x})$ as described in Section \ref{u_sampling}. The relative error $\varepsilon$ in the estimated $\hat{\kappa}$ are computed for each case using Eq \eqref{relative_er}.

\subsubsection{Case 1: Randomly distributed locations of $\kappa$ observations} 

Here, we assume that 20 observations of $\hat{\kappa}$ are randomly distributed in the domain, as shown in Figure \ref{fig:inverse_2d_lgCov_randomk}a. The resulting conditional KL expansion of $\kappa$ has 5 unknown parameters that we estimate using 10 measurements of  $\hat{u}$. 
To study the effect of $\hat{u}$ measurement locations on the $\hat{\kappa}$ estimate errors,  we consider  measurements of $\hat{u}$ distributed (a) randomly and (b) according to $\sigma^2_u(\mb{x})$. Figure \ref{fig:inverse_2d_lgCov_randomk}b depicts the conditional $\sigma^2_u$t and the $u$ measurements locations. 
 Figure \ref{fig:inverse_2d_lgCov_case1} presents the corresponding relative errors of the estimated ${\hat{\kappa}}$.  

In both cases, the relative errors are smaller than 1\%. That is no surprise given that the number of measurements of $\kappa$ (20) is close to the number of terms in the unconditional KL representation of $\kappa$. Choosing the location of  $\hat{u}$ measurements  based on the variance of $u$  reduces $\varepsilon(\mb{x})$ by approximately the factor of 3 and the infinity norm, $\| \varepsilon (\mb{x})\|_{L^{\infty}}$, by 80\% compared to the randomly chosen locations of  $\hat{u}$ measurements.

\begin{figure}[htbp]
\centerline{
\psfig{file = 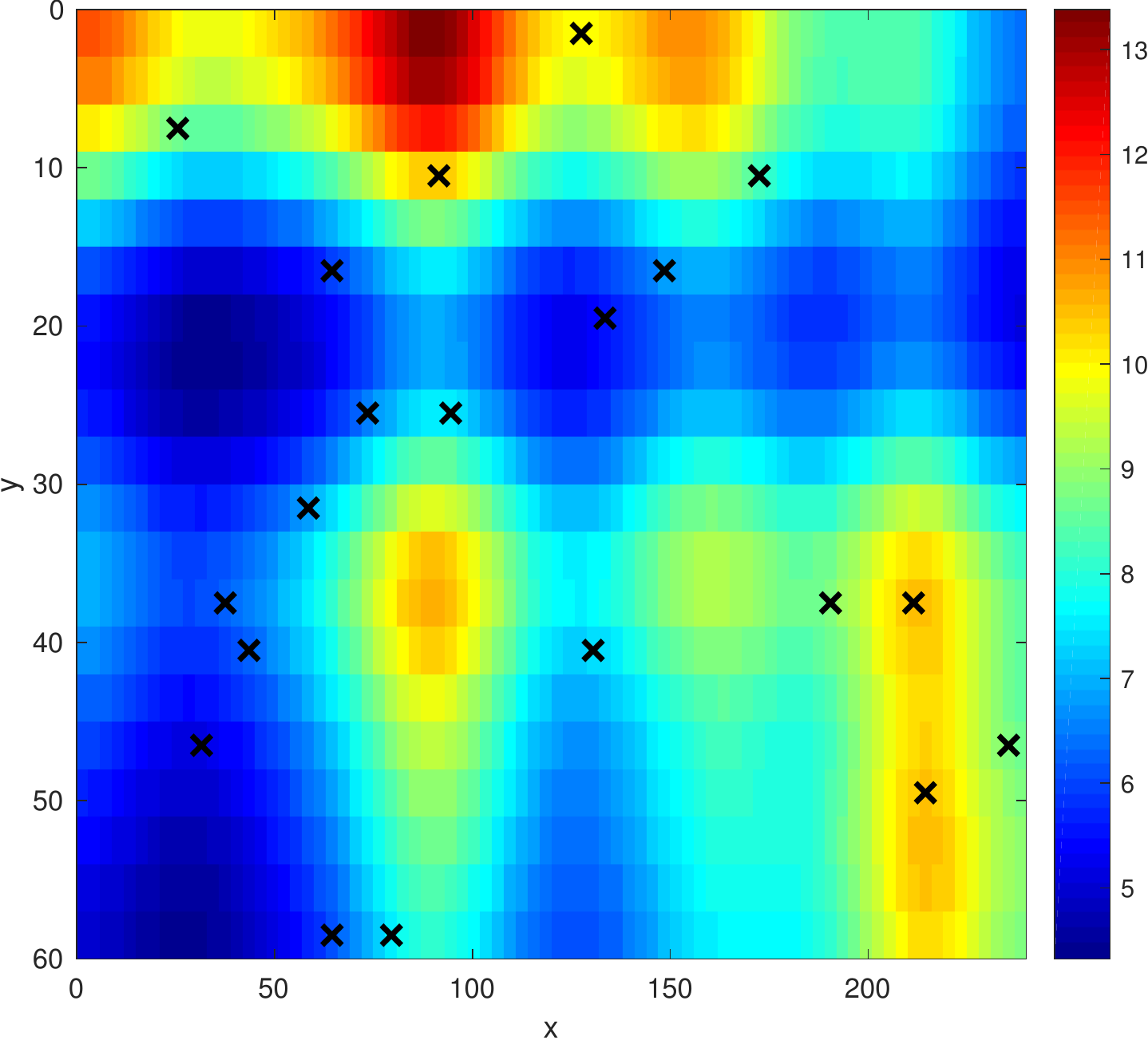, width = 7cm}
\psfig{file = 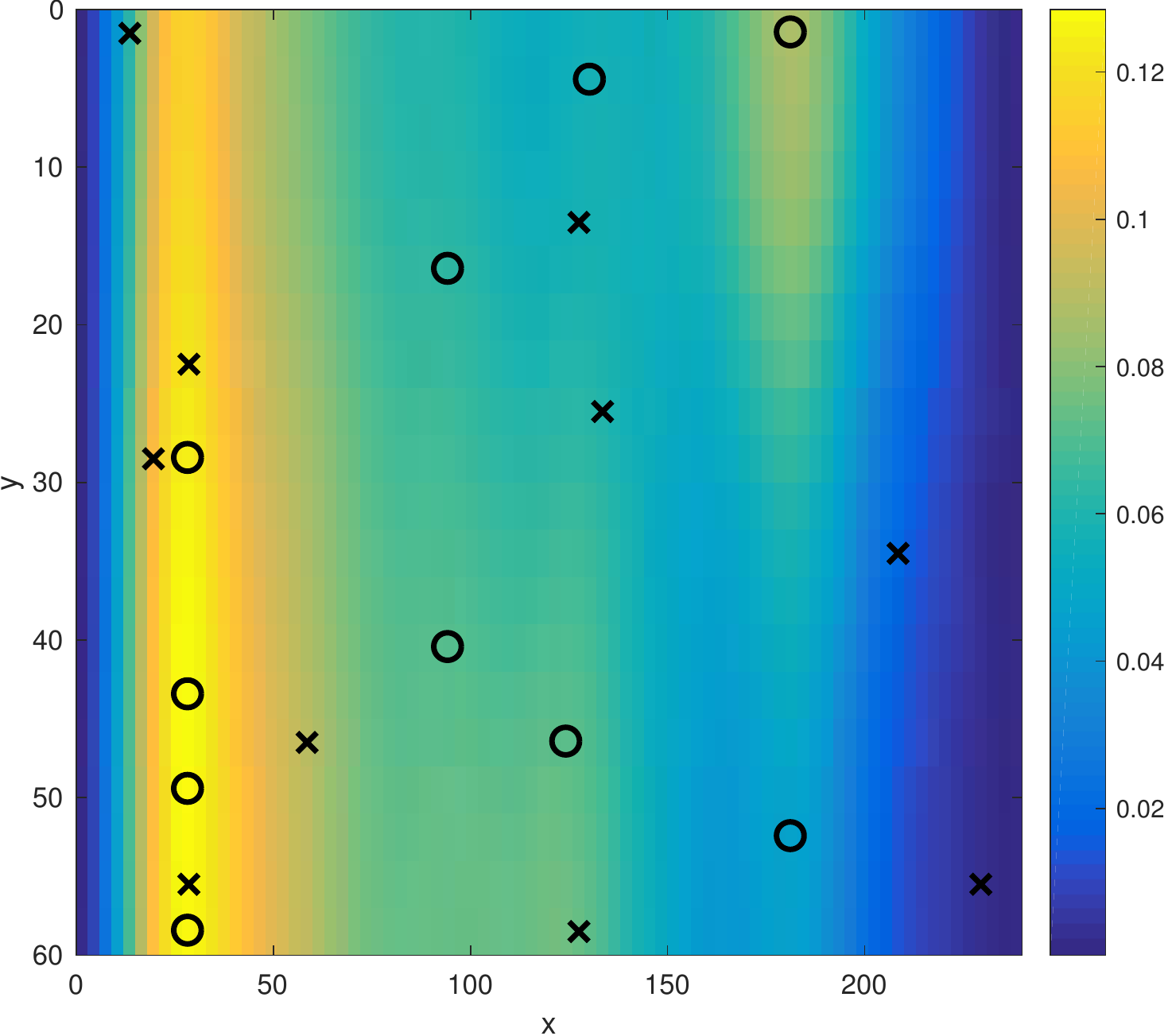, width = 7cm}
}
\caption{Two-dimensional diffusion equation, Case 1 (randomly located measurements of $\hat{\kappa}$).  (a) The color map describes the reference conductivity field $\hat{\kappa}$, and  the cross markers indicate the randomly selected locations of $\hat{\kappa}$ measurements (left). (b) The color scale denotes conditioned $\sigma^2_u$. Cross symbols denote the randomly chosen measurement locations of $\hat{u}$. Circles denote the $u$ measurement locations based on  $\sigma^2_u(\mb{x})$ (right).}
\label{fig:inverse_2d_lgCov_randomk}
\end{figure}

\begin{figure}[htbp]
\centerline{
\psfig{file = 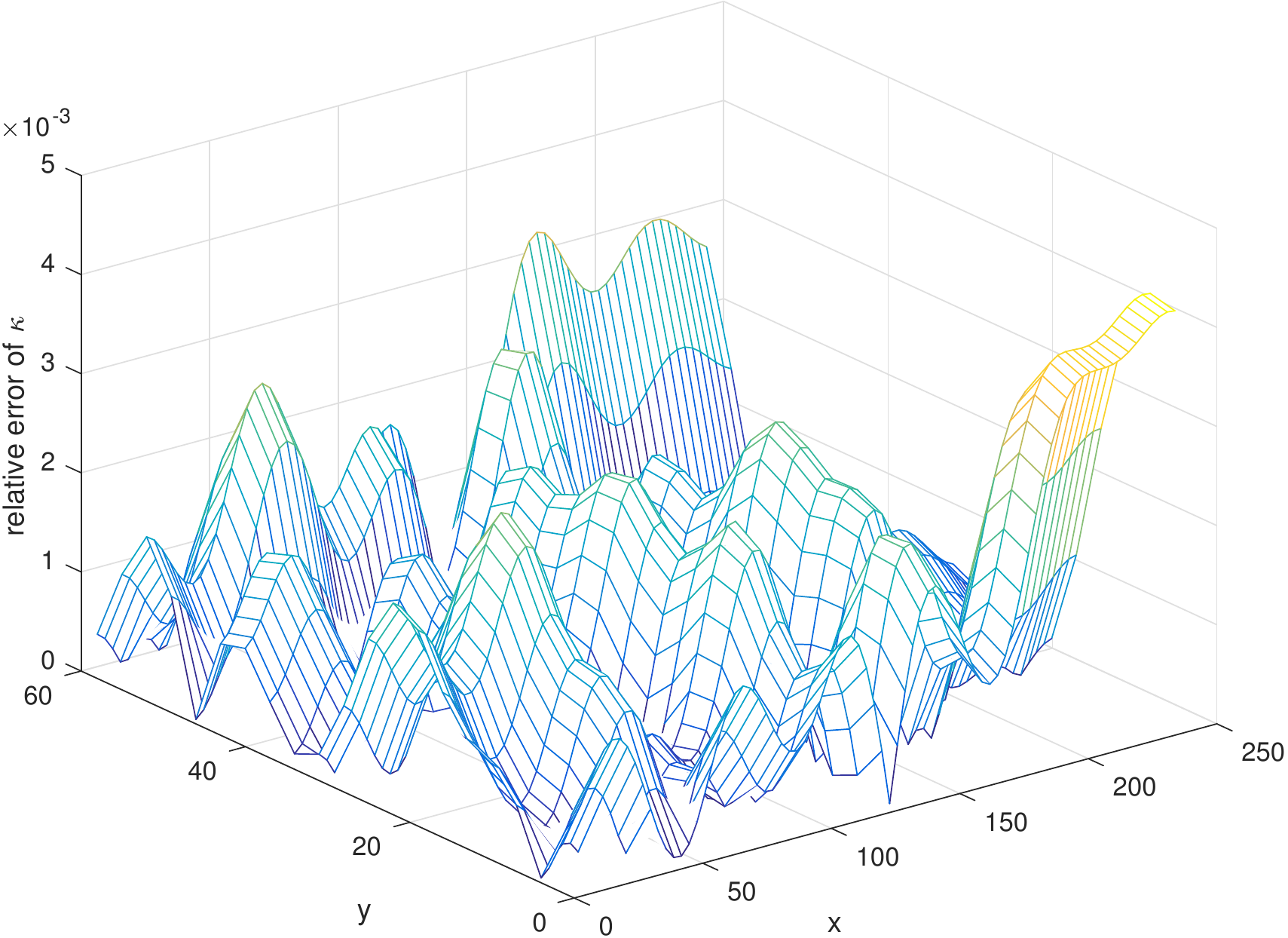, width = 7cm}
\psfig{file = 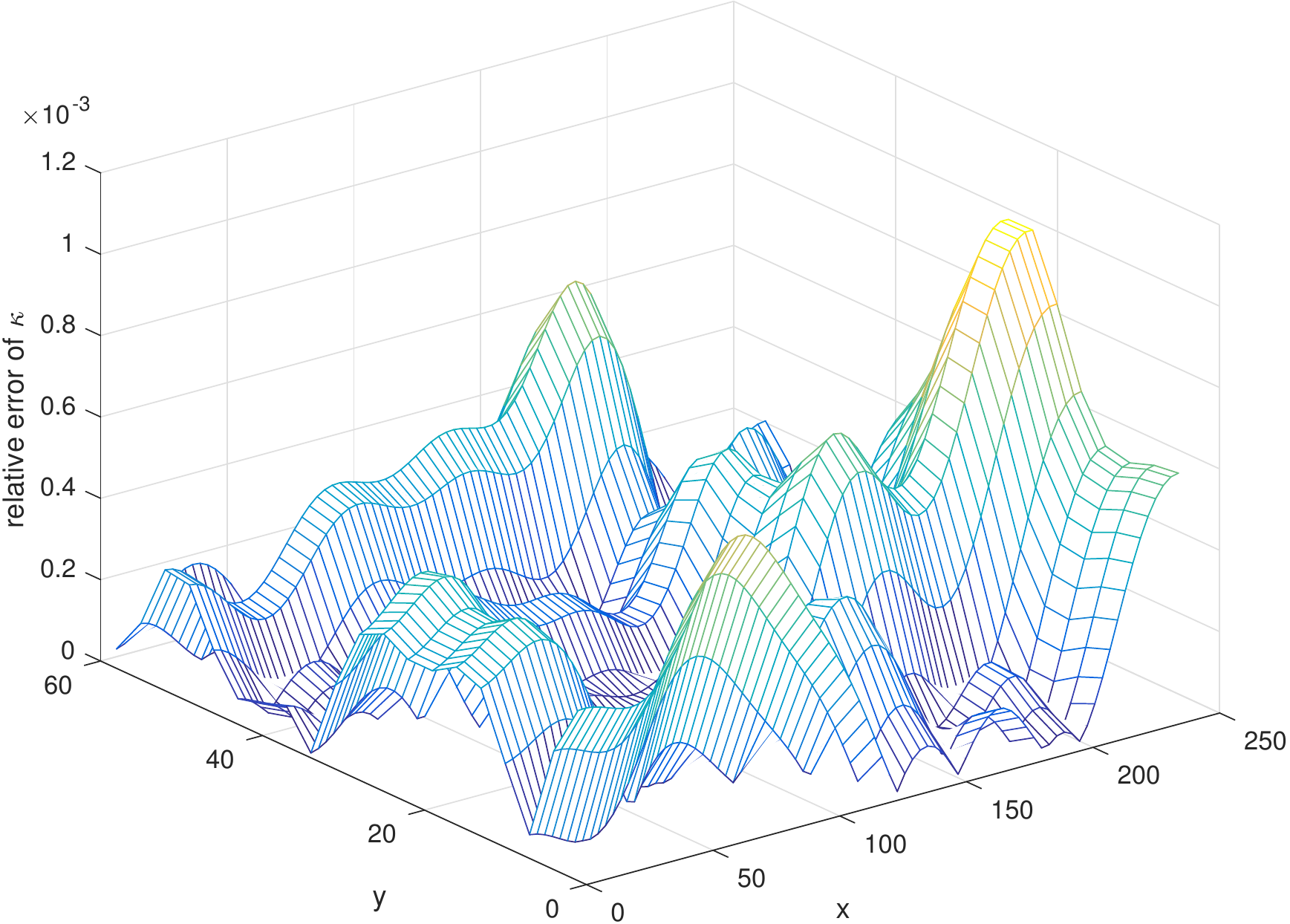, width = 7cm}
}
\caption{Two-dimensional diffusion equation, Case 1. Relative errors for (left) randomly located measurements of $\hat{u}$ and (right) measurements of $\hat{u}$ chosen based on $\sigma^2_u$.}
\label{fig:inverse_2d_lgCov_case1}
\end{figure}

\subsubsection{Case 2: Some observations of $\kappa$ are colocated with local maxima and minima of $\kappa$ field} 

Here, we assume that 9 $\hat{\kappa}$ observations are collocated with local maxima and minima of $\hat{\kappa}$,  and the other 11 $\hat{\kappa}$ observations   are randomly spread across the domain, as shown in Figure \ref{fig:inverse_2d_lgCov_selectk}a.  The resulting conditional KL expansion of $\kappa$ has 5 unknown parameters that we estimate using 10 measurements of  $\hat{u}$.  As in the previous example,  we consider  measurements of $\hat{u}$ distributed randomly and according to $\sigma^2_u(\mb{x})$, as shown in Figure \ref{fig:inverse_2d_lgCov_selectk}b.  
The corresponding relative errors in the estimated $\hat{\kappa}$ field are shown in Figure \ref{fig:inverse_2d_lgCov_case2}.

We see that for both choices of the $\hat{u}$ measurement locations,  $\varepsilon(x)$ is smaller than 1\%.  Choosing the location of $\hat{u}$ measurements based $\sigma^2_u(\mb{x})$ reduces  $\| \varepsilon \|_{L_{\infty}}$ by $25\%$ as compared with the random distribution of $\hat{u}$ measurements. 
The comparison of Figures \ref{fig:inverse_2d_lgCov_case1} and \ref{fig:inverse_2d_lgCov_case2} show that the proposed parameter estimation method performs well for both cases.    
 
\begin{figure}[htbp]
\centerline{
\psfig{file = 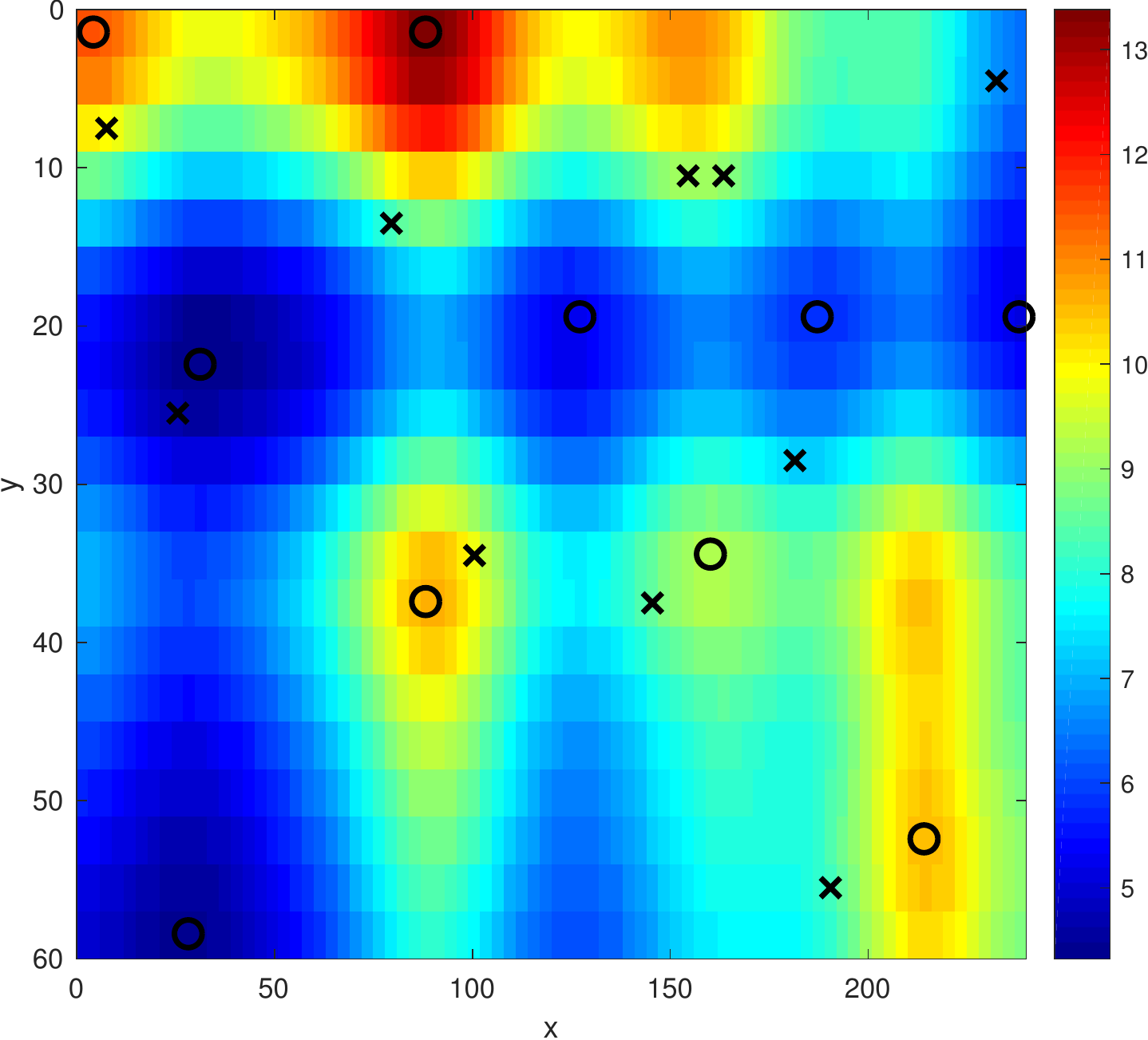, width = 7cm}
\psfig{file = 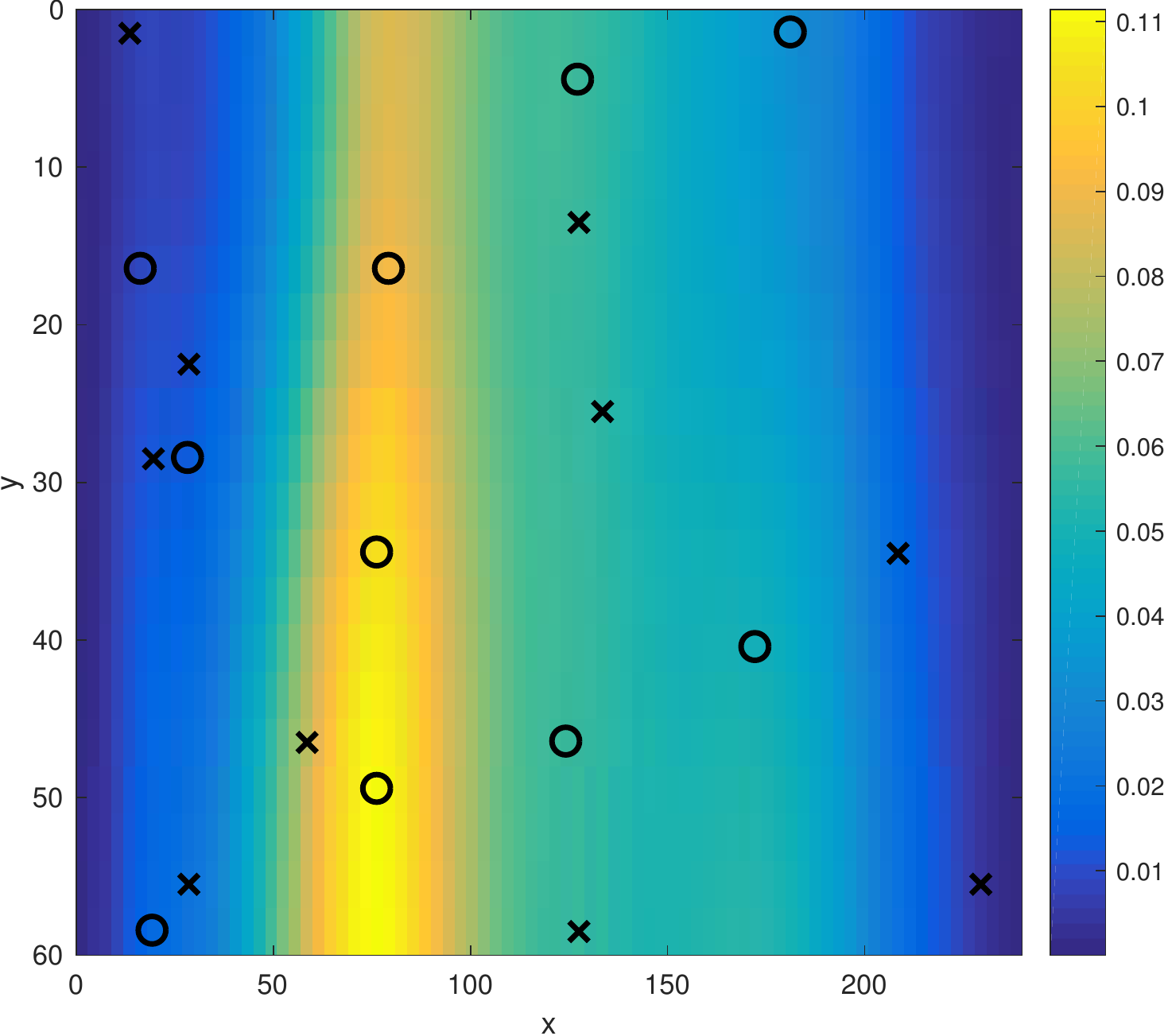, width = 7cm}
}
\caption{Example 2 Case 2 (9 selected observations of $\hat{\kappa}$ denoted by "$\circ$" markers with 11 randomly chosen observations of $\hat{\kappa}$ denoted by "$\times$" markers (left)). Two different strategies of measurement locations of $\hat{u}$: (a) cross symbols denote the randomly chosen measurement locations and (b) circle symbols denote the $u$ measurement locations based on $\sigma^2_u$ (right).}
\label{fig:inverse_2d_lgCov_selectk}
\end{figure}

\begin{figure}[htbp]
\centerline{
\psfig{file = 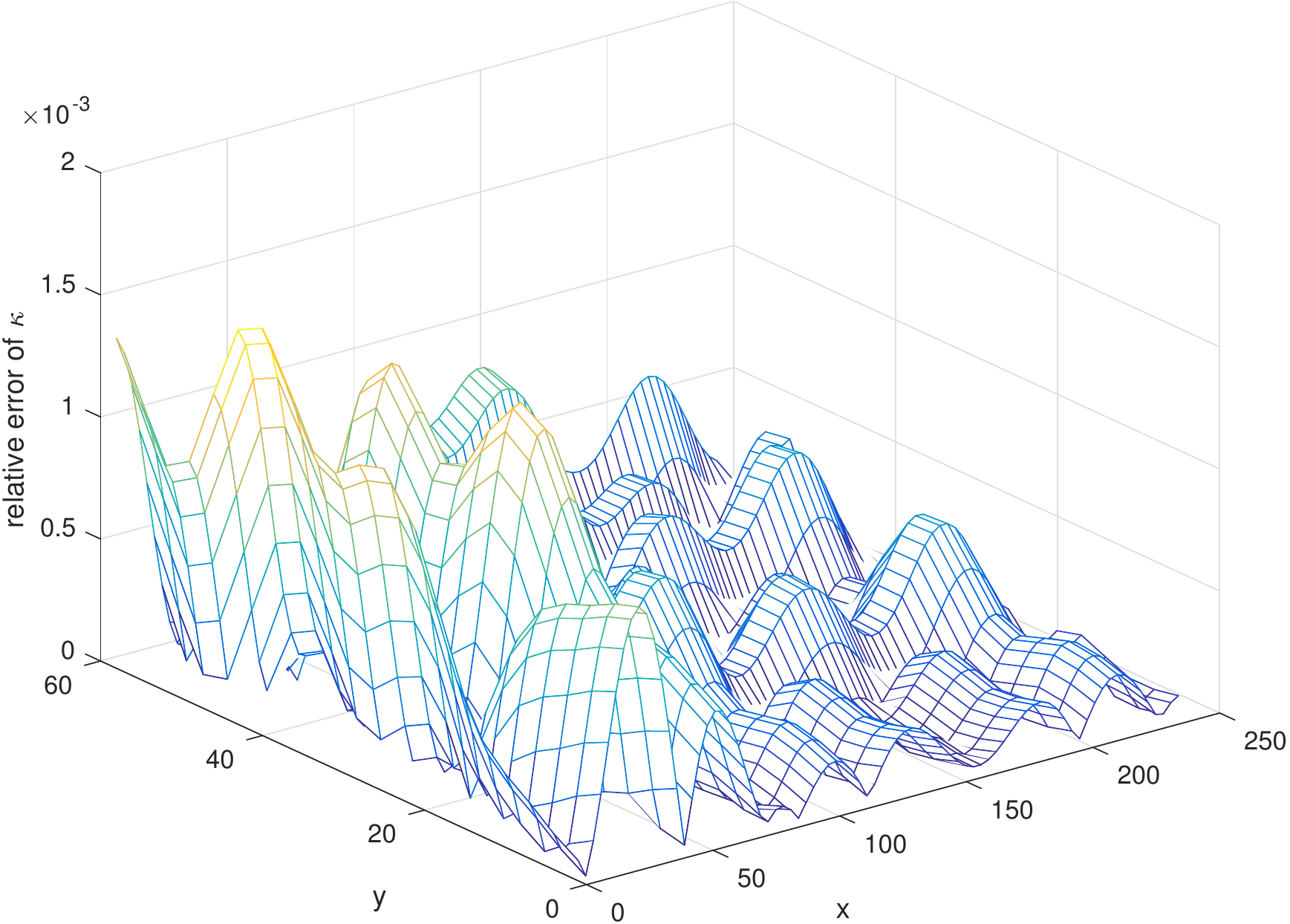, width = 7cm}
\psfig{file = 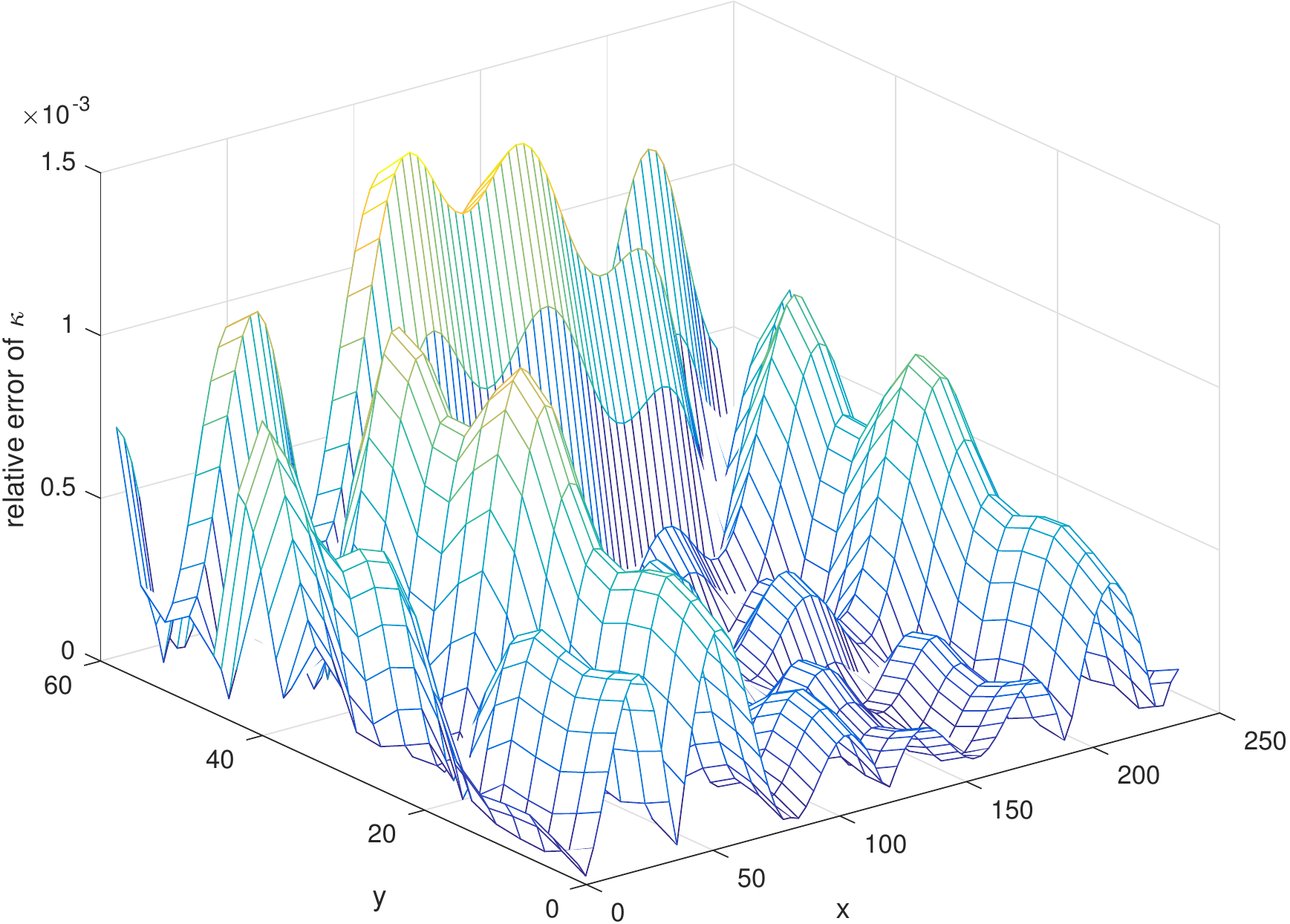 , width = 7cm}
}
\caption{Example 2 Case 2. Relative errors of the two strategies of $u$ measurement locations (left) randomly located measurements of $\hat{u}$ and (right) measurements of $\hat{u}$ taken based on $\sigma^2_u$.}
\label{fig:inverse_2d_lgCov_case2}
\end{figure}

\subsection{Two-dimensional diffusion equation with small correlation length of $\kappa$ (rough $\kappa$ field)}

In this example, we assume the exponential covariance function of $ g(x,\omega) = \log \kappa(x,\omega)$:
\begin{equation}
C(\mb{x}^{(1)},\mb{x}^{(2)}) = e^{-\frac{|\mb{x}^{(1)}-\mb{x}^{(2)}|^2}{L^2}}, \quad \mb{x}^{(1)},\mb{x}^{(2)}\in D, 
\end{equation} 
where $|\mb{x}^{(1)}-\mb{x}^{(2)}|$ is the Euclidean distance between $\mb{x}^{(1)}$ and $\mb{x}^{(2)}$ and $L= 0.1$. The mean and variance of $\kappa(\mb{x},\omega)$ are set to $\mu_k = 5$ and $\sigma_k = 2.5$.

We solve the two-dimensional equation \eqref{eq:diffusion_2d} on $D = [0,2]\times [0,1] $ subject to the boundary conditions:
\begin{equation}\label{eq:diff_2d_bd2}
\begin{split}
&u(0,x_2,\omega) = 2,\quad u(2,x_2,\omega) = 0,\\
&-\mb{n}\cdot (\kappa(\mb{x},\omega)\nabla u)|_{(x_1,0)} = \mb{n}\cdot (\kappa(\mb{x},\omega)\nabla u)|_{(x_1,1)} = 0, 
\end{split}
\end{equation}
where $\mb{n} = (0,1)$.

Here, the ratio of the correlation length to the domain size is on the order of 0.1, which is smaller than in the previous case and results in a rougher $\kappa$. The KL representation of the unconditional random field $g(\mb{x},\omega)$ requires  $210$ terms to capture 95\% of spectrum,

\begin{equation}\label{KL210}
g(\mb{x},\bm{\xi}) = \mu_g+ \sigma_g\sum_{i=1}^{210}\sqrt{\lambda_{i}}\epsilon_i(x)\xi_i.
\end{equation}

The reference conductivity field $\kappa(\bm{x},\mb{\xi}^*)$ is constructed as a realization of $\kappa(\bm{x},\mb{\xi})$ and is shown in Figure \ref{fig:inverse_2d_stCov_randomk}a, which is significantly rougher than the field in Figure \ref{fig:inverse_2d_lgCov_randomk}a considered in the previous case.

The corresponding reference field $\hat{u}(\bm{x},\mb{\xi}^*)$ is found as the solution of Eq \eqref{eq:diffusion_2d} subject to the boundary conditions \eqref{eq:diff_2d_bd2}. This equation is solved using the finite volume method with equal-distanced mesh with $128\times 64$ elements. 
We assume that 205 measurements of $\hat{\kappa}$ and 10 measurements of $\hat{u}$ are available to estimate the entire field $\hat{\kappa}(x)$. The KL expansion (\ref{KL210}), conditioned on 205 measurements, has five random dimensions. As before, we study the effect of $\hat{\kappa}$ and $\hat{u}$ measurements locations distribution on the accuracy of $\kappa$ estimation. %by considering four choices to select locations of $\hat{\kappa}$ and $\hat{u}$ measurements.  

\subsubsection{Case 1: Randomly located $\kappa$ observations} 

Here, we assume that the locations of $\hat{\kappa}$ measurements are randomly distributed, as shown in Figure \ref{fig:inverse_2d_stCov_randomk}a. Next we consider 10 $\hat{u}$ measurement locations distributed in two ways, including randomly distributed measurements and measurements distributed according to the conditional $\sigma^2_u(\mb{x})$; both distributions are depicted in Figure \ref{fig:inverse_2d_stCov_randomk}b.  Figure \ref{fig:inverse_2d_stCov_case1} presents the corresponding relative error of inferred 
$\hat{\kappa}(x)$.

The comparison of Figures \ref{fig:inverse_2d_stCov_case1}a and b show that choosing $\hat{u}$ measurement locations according to the $u$ variance reduces the $\| \varepsilon \|_{L^{\infty}}$ error by a factor of 10. 

\begin{figure}[htbp]
\centerline{
\psfig{file = 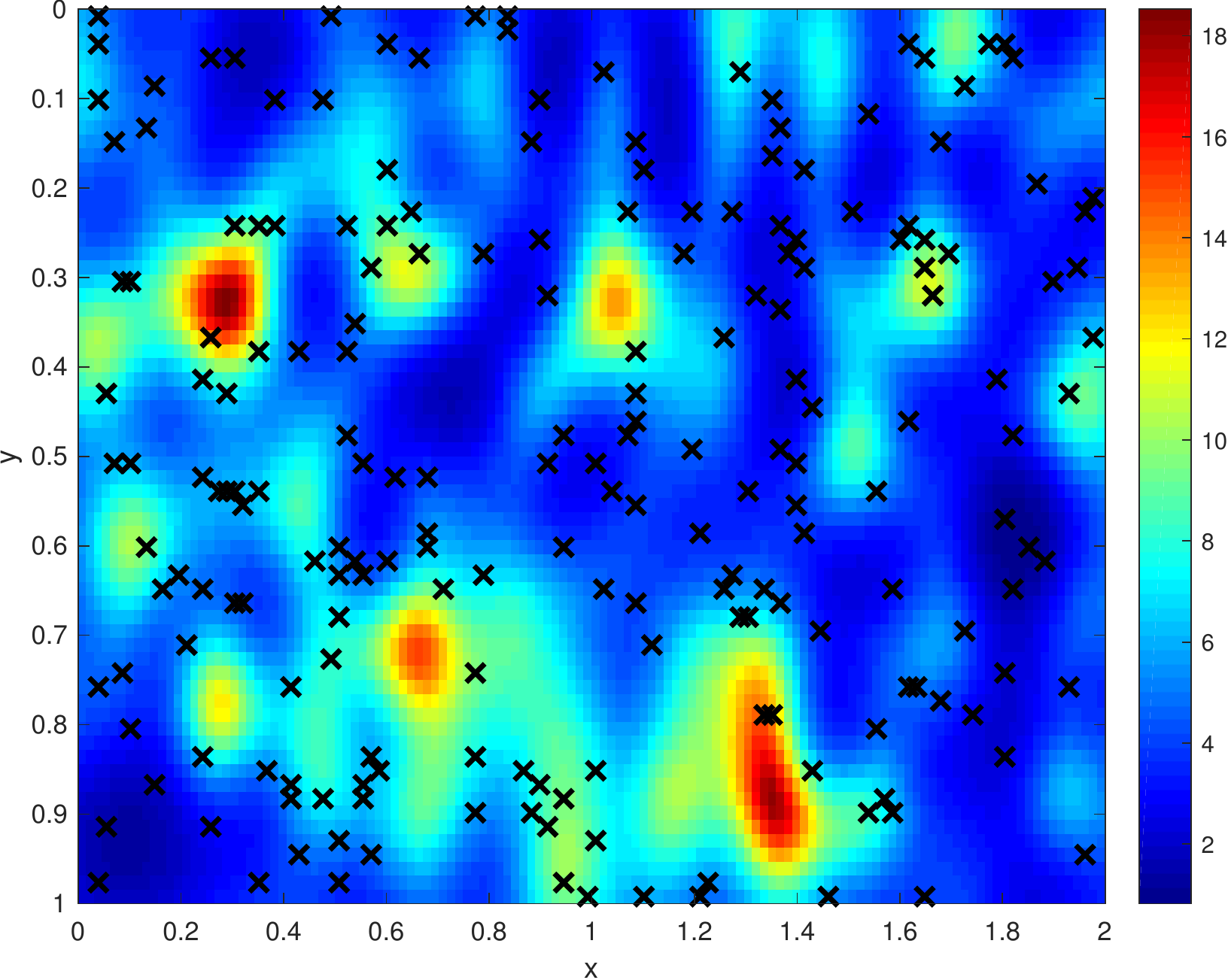, width = 7cm}
\psfig{file = 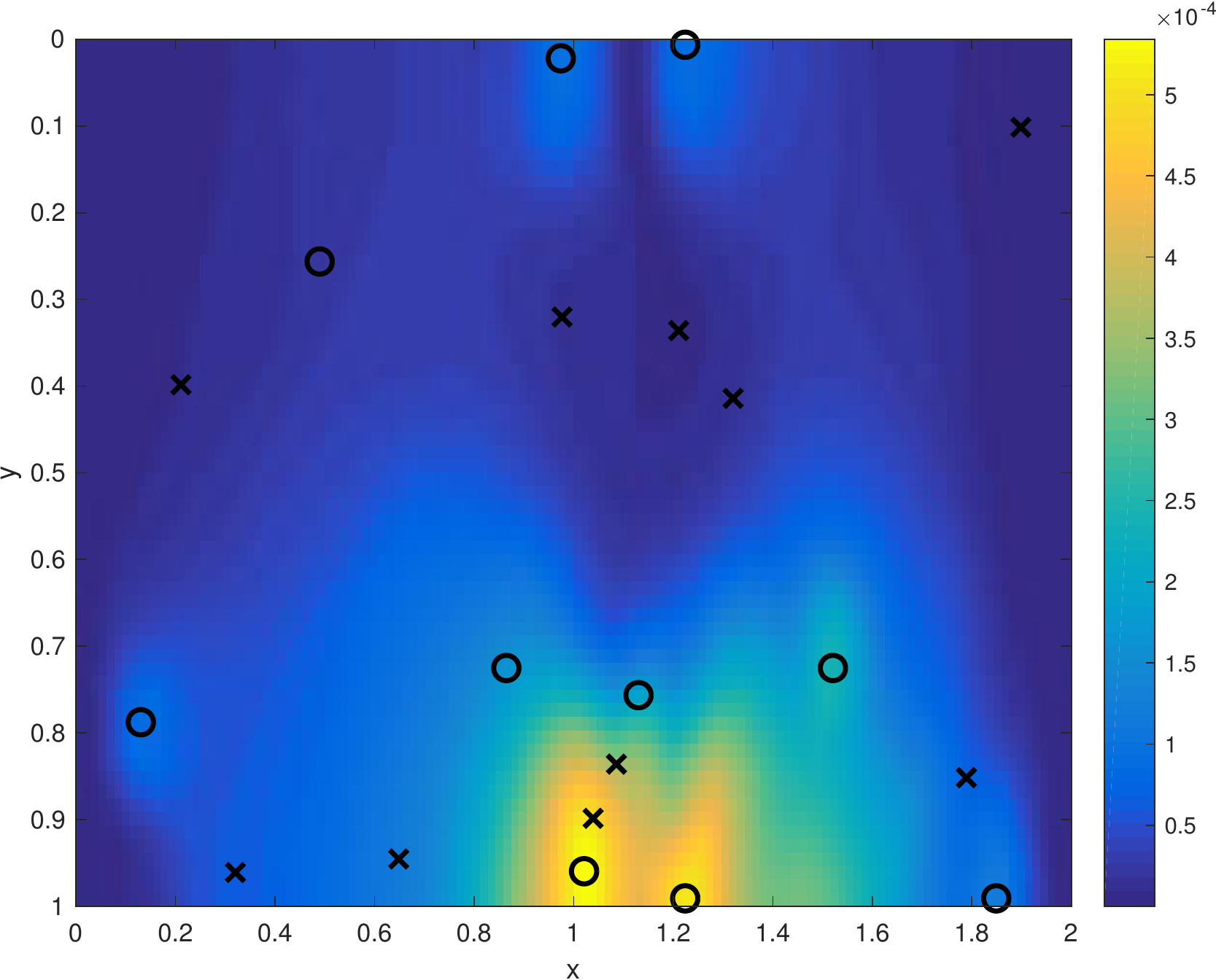, width = 7cm}
}
\caption{Example 3 Case 1(random observations of $\hat{\kappa}$ (left)). Two strategies of selecting $u$ measurement locations: (a) randomly collocated on the variance surface of $u$ denoted by cross symbols (b) collocated based on $\sigma_u^2$ denoted by circles (right).}
\label{fig:inverse_2d_stCov_randomk}
\end{figure}

\begin{figure}[htbp]
\centerline{
\psfig{file = 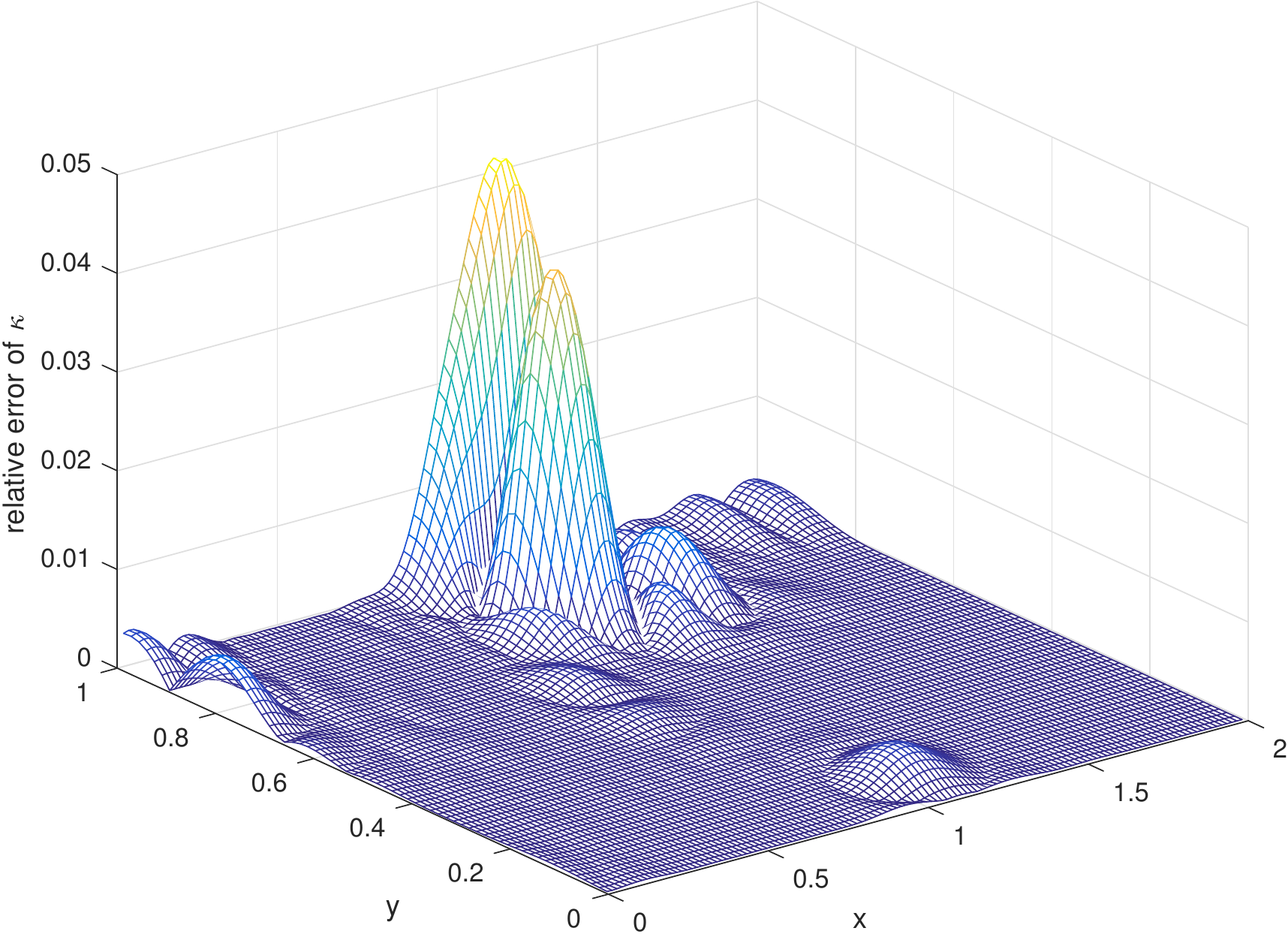, width = 7cm}
\psfig{file = 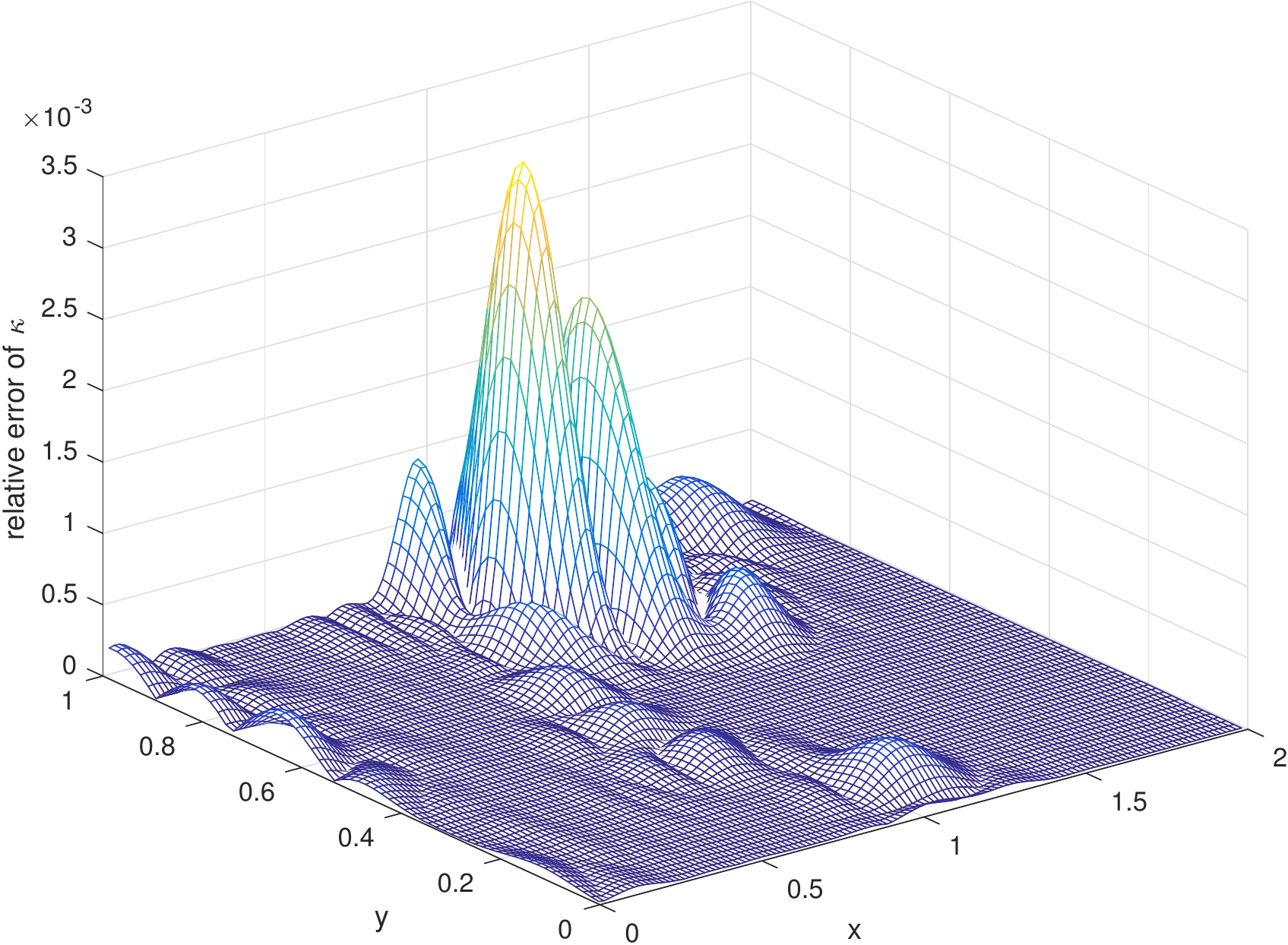 , width = 7cm}
}
\caption{Example 3 Case 1. Relative errors of the two strategies of $u$ measurement locations (left) randomly located measurements of $\hat{u}$ and (right) measurements of $\hat{u}$ taken based on $\sigma_u^2$.}
\label{fig:inverse_2d_stCov_case1}
\end{figure}

\subsubsection{Case 2: Some observations of $\kappa$ are colocated with local maxima and minima of $\kappa$ field} 
Here, we assume that 50 locations of $\hat{\kappa}$ measurements  are collocated with local maxima, minima, and saddle points of $\hat{\kappa}$ and, $155$ measurement locations are randomly distributed, as shown in Figure \ref{fig:inverse_2d_stCov_selectk}a. As before, we consider two choices of $\hat{u}$ measurement locations, including random locations and locations based on the conditional $\sigma^2_u(\mb{x})$ as shown in Figure \ref{fig:inverse_2d_stCov_selectk}b. Figure \ref{fig:inverse_2d_stCov_selectk}b also displays the conditional  $\sigma^2_u(\mb{x})$ as a function of $\mb{x}$.
Figure \ref{fig:inverse_2d_stCov_case2} presents the corresponding relative error of the inferred $\hat{\kappa}(\mb{x})$.

The comparison of Cases 1 and 2 shows that for randomly located $\hat{\kappa}$ measurements, the selection of $u$ measurement locations based on the conditional $u$ variance is very important -- it reduces the $\| \varepsilon (\mb{x})\|_{L_{\infty}}$ error by a factor of 10. For specially selected locations of $\hat{\kappa}$, the $\sigma^2_u(\mb{x})$-based selection of $\hat{u}$ measurement locations reduces $\| \varepsilon (\mb{x})\|_{L_{\infty}}$ by a factor of 0.4.    

\begin{figure}[htbp]
\centerline{
\psfig{file = 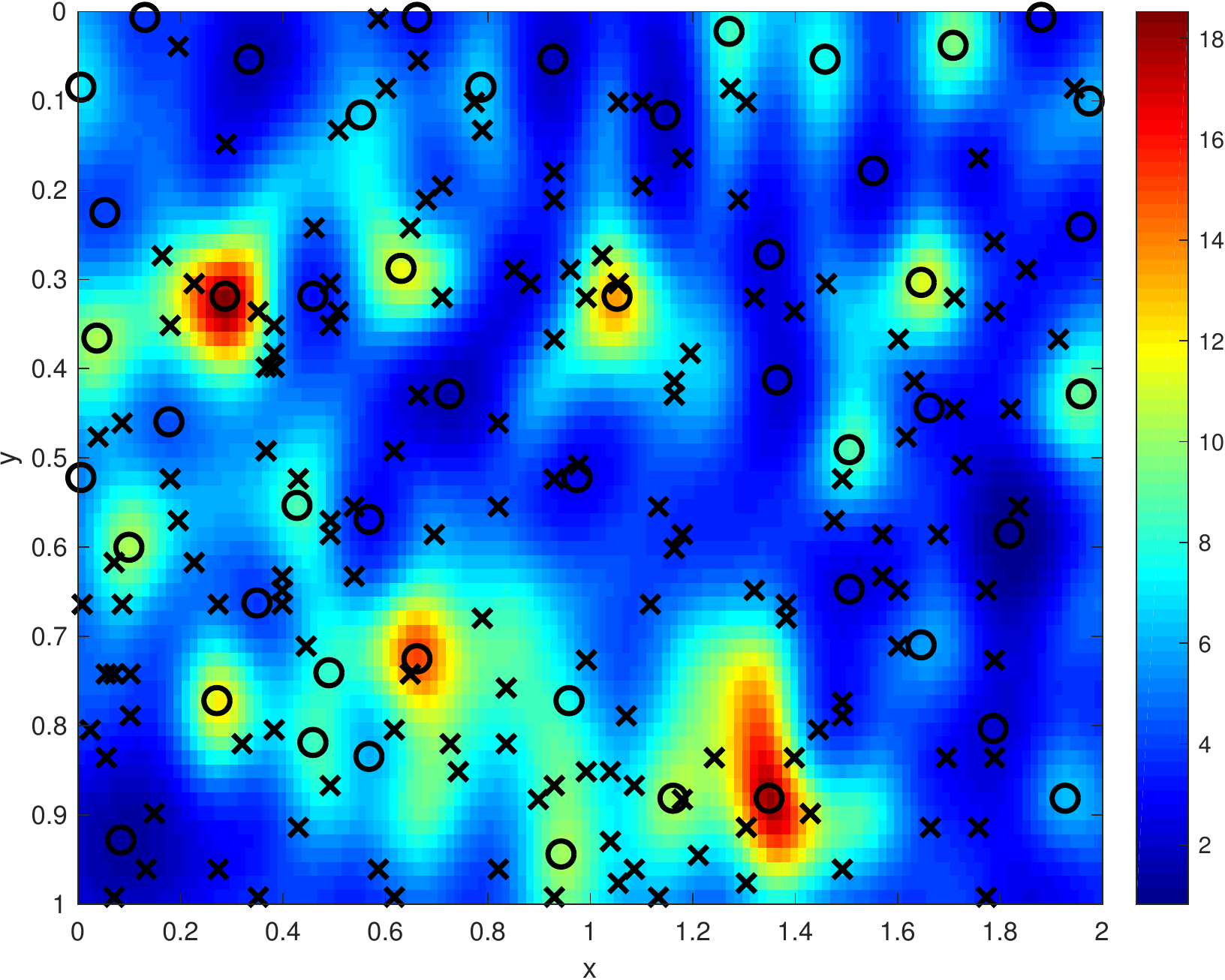, width = 7cm}
\psfig{file = 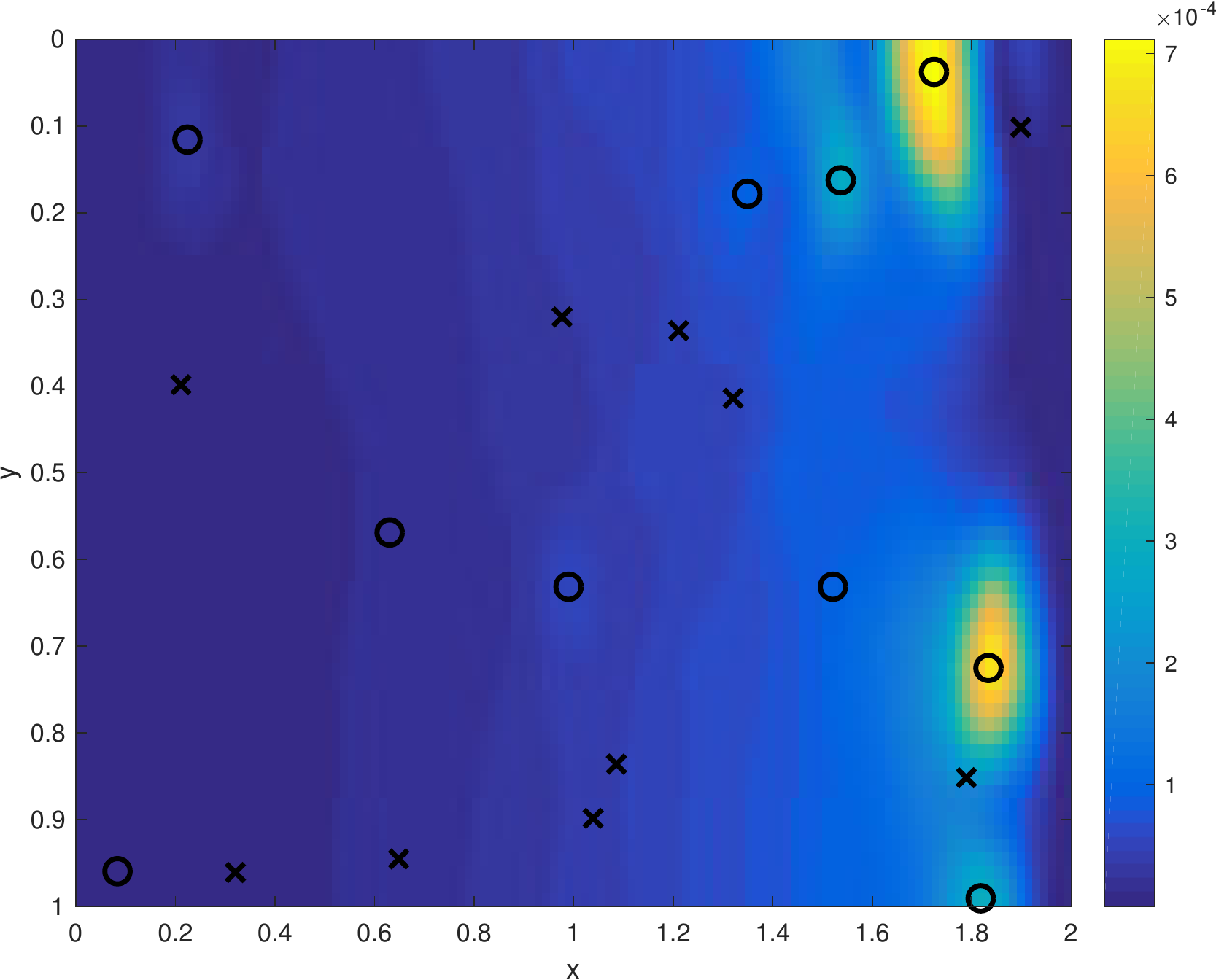, width = 7cm}
}
\caption{Example 3 Case 2 (selected observations of $\hat{\kappa}$ (left)). Two strategies of selecting $u$ measurement locations: (a) randomly collocated on the variance surface of $u$ denoted by cross symbols (b) collocated based on $\sigma_u^2$ denoted by circles (right).}
\label{fig:inverse_2d_stCov_selectk}
\end{figure}

\begin{figure}[htbp]
\centerline{
\psfig{file = 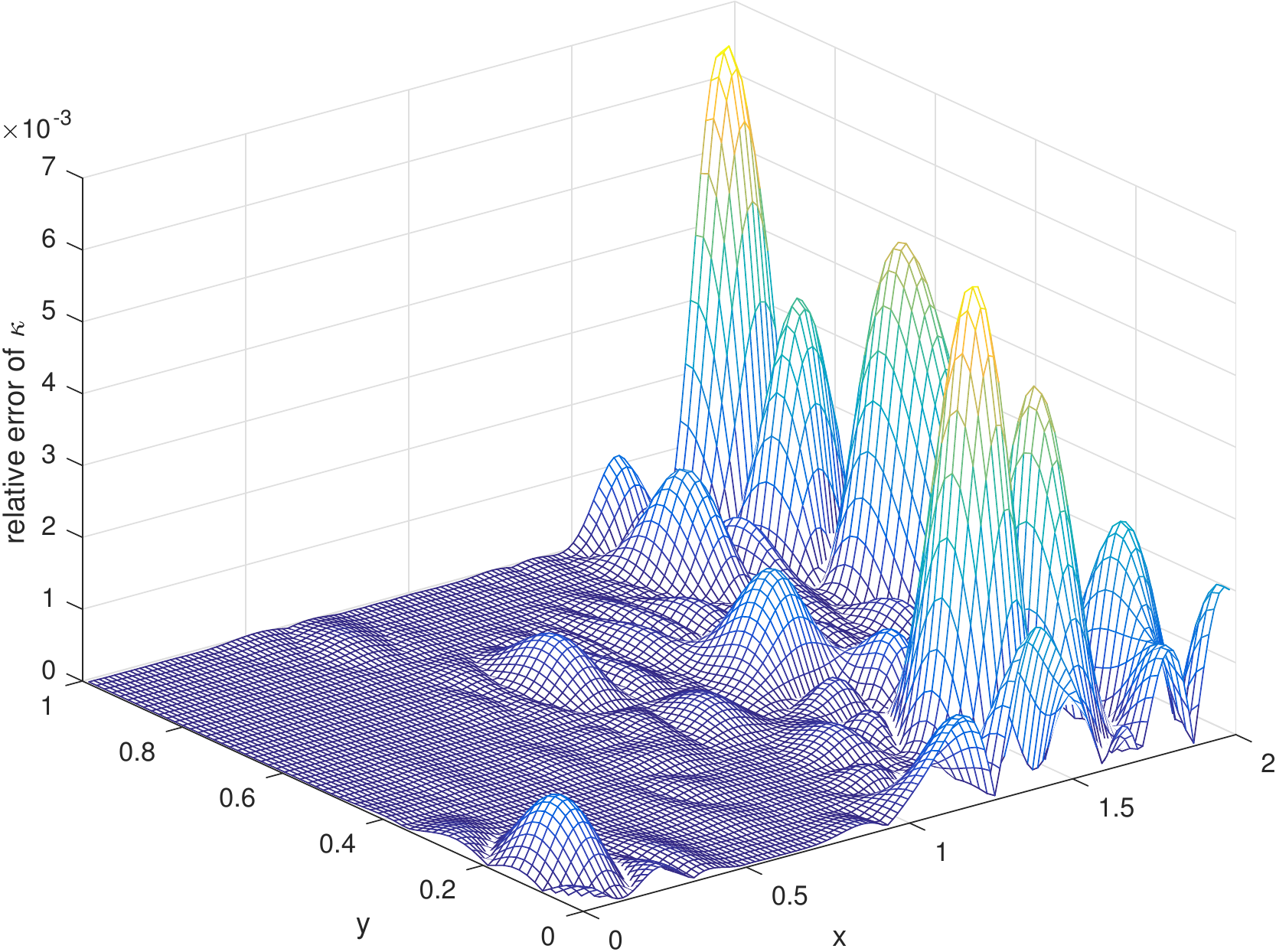, width = 7cm}
\psfig{file = 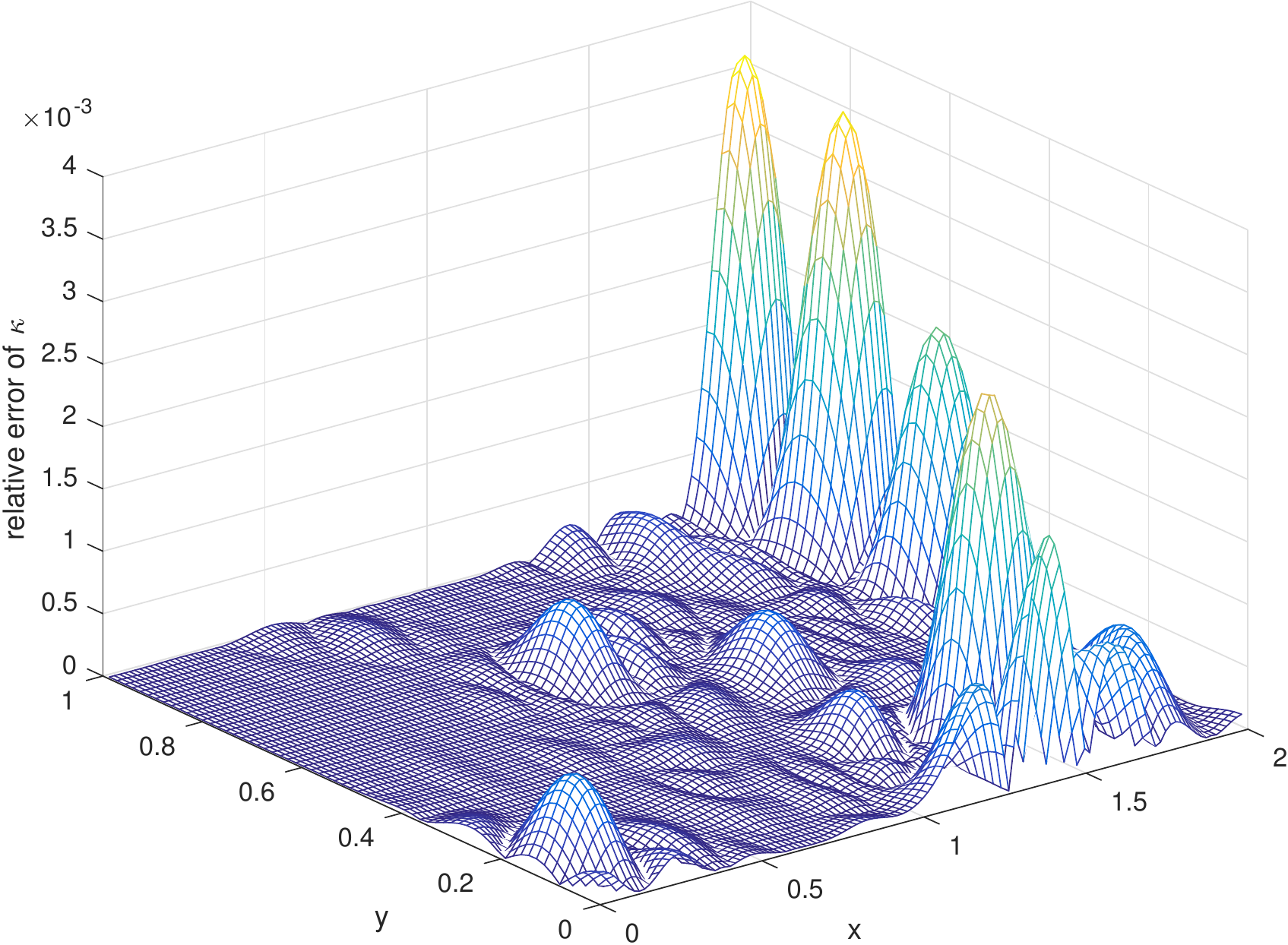 , width = 7cm}
}
\caption{Example 3 Case 2. Relative errors of the two strategies of $u$ measurement locations (left) randomly located measurements of $\hat{u}$ and (right) measurements of $\hat{u}$ taken based on $\sigma^2_u$.}
\label{fig:inverse_2d_stCov_case2}
\end{figure}

\begin{remark}\label{con_R1}
In all considered examples, we see a correlation between the error of the estimated $\kappa$ and the variance of $u$. That is, near the area that the gPC solution has large variance, the inferred $\kappa$ usually has larger relative error. 
\end{remark}

\section{Conclusion}\label{sec:conclusion}
In this work, we proposed a conditional KL expansion and gPC surrogate model for estimating space-dependent coefficients  in partial differential equation models using measurements of the coefficient and state variable.
We demonstrated that the conditional gPC model simplifies the optimization problem and reduces the number of unknown parameters as compared to parameter estimation using traditional (unconditional) gPC surrogates.  
Specifically,  the conditional gPC reduces dimensionality of the parameter space and replaces expensive direct solutions of PDEs with the conditional gPC surrogate.  Also, in the absence of measurement error, the estimated parameter field exactly matches the parameter measurements. 
Furthermore, we proposed using the surrogate model to determine measurement locations based on the variance of state variable conditioned on the coefficient measurements. Specifically, we proposed collocating measurement with the local maxima of the state variable variance predicted by the conditional gPC. 
We presented one- and two-dimensional examples demonstrating the overall accuracy of the proposed approach (it is more accurate than Gaussian process regression). We also showed that the error in the estimated parameter is smaller when the state variable measurements are chosen based on its variance rather than uniformly or randomly collected. Also, we demonstrated that selecting the coefficient measurement locations based on physical knowledge (i.e., collocating coefficient measurements with local maxima and minima of the coefficient) further reduces error in the estimated coefficient.    

Finally, in all considered examples, we see  correlation between the error of the estimated coefficient and the variance of the solution; that is, near the area that the gPC solution has large variance, the inferred coefficient has larger relative error. Therefore, the conditional variance of state variables can be used to guide the data acquisition for the coefficient.  

 In the current work, we only investigated cases where the unknown coefficient lies in a function space of the finite KL modes (i.e., the reference solution is a realization of the known finite-dimensional Gaussian process) and the number of solution measurements is greater than the dimension of the conditional KL representation. Problems with small data sets and coefficients with unknown and/or non-Gaussian distribution will yield additional errors and uncertainty in parameter estimation and will be subject of our future work.
  
\section*{Acknowledgements}
This work was supported by the U.S. Department of Energy (DOE) Office of Science, Office of Advanced Scientific Computing Research. Pacific Northwest National Laboratory is operated by Battelle for the DOE under Contract DE-AC05-76RL01830.

\bibliography{random.bib,sc-uq-DCATL1.bib}
\end{document}